\DeclareMathOperator{\conex}{\ds\mathlarger{\mathlarger{\nabla}}}
\newcommand{\Q}{\mathfrak{Q}}
\newcommand{\Hc}{\mathcal{H}}
\newcommand{\M}{\mathcal{M}}
\DeclareMathOperator{\diver}{div}
\DeclareMathOperator{\Hess}{\nabla^2}
\newcommand{\Hessij}{\nabla^2_{ij}}
\DeclareMathOperator{\dist}{dist}
\DeclareMathOperator{\sech}{sech}
\DeclareMathOperator{\diam}{diam}
\DeclareMathOperator{\R}{\mathbb{R}}
\DeclareMathOperator{\HH}{\mathbb{H}}
\DeclareMathOperator{\Ricc}{Ricc}
\newcommand{\W}{\Omega}
\newcommand{\norm}[1]{\left\| #1 \right\|}
\newcommand{\modulo}[1]{\left| #1 \right|}
\newcommand{\cl}{\mathscr{C}}
\newcommand{\ds}{\displaystyle}
\newcommand{\Eum}{\ds\mathsmaller{\frac{\partial}{\partial x_1}}}
\newcommand{\Ei}{\ds\mathsmaller{\frac{\partial}{\partial x_i}}}
\newcommand{\Ej}{\ds\mathsmaller{\frac{\partial}{\partial x_j}}}
\newcommand{\Ek}{\ds\mathsmaller{\frac{\partial}{\partial x_k}}}
\newcommand{\Dz}{\partial_z}
\newcommand{\Di}{\partial_i}
\newcommand{\Dj}{\partial_j}
\newcommand{\Dk}{\partial_k}
\newcommand{\Dij}{\partial_{ij}}
\newcommand{\Dii}{\partial_{ii}}
\newcommand{\Dkk}{\partial_{kk}}
\newcommand{\Dkki}{\partial_{kki}}
\newcommand{\Dkkum}{\partial_{kk1}}
\newcommand{\Dki}{\partial_{ki}}
\newcommand{\Dkj}{\partial_{kj}}
\newcommand{\Dum}{\partial_1}
\newcommand{\Dumk}{\partial_{1k}}
\newcommand{\Dumi}{\partial_{1i}}
\newcommand{\Dumum}{\partial_{11}}
\newcommand{\Dumumum}{\partial_{111}}
\newcommand{\Dumij}{\partial_{1ij}}
\newcommand{\Dumii}{\partial_{1ii}}
\newcommand{\escalar}[2]{{\left\langle #1,#2 \right\rangle}}
\newcommand{\lrarrow}{\longrightarrow}
\newcommand{\funciones}[5]{\begin{array}{rccl}
#1:&#2&\lrarrow&#3\\
&#4& \xmapsto{\phantom{\lrarrow}}&#5\end{array}}
\newcommand{\pushright}[1]{\ifmeasuring@#1\else\omit\hfill$\displaystyle#1$\fi\ignorespaces}
\newcommand{\pushleft}[1]{\ifmeasuring@#1\else\omit$\displaystyle#1$\hfill\fi\ignorespaces}
\newtheorem{teo}{Theorem}%[section]
\newtheorem{lema}[teo]{Lemma}
\theoremstyle{definition}
\newtheorem{obs}[teo]{Remark}
\def\th@plain{%
  \thm@notefont{}% same as heading font
  \itshape % body font
}
\def\th@definition{%
  \thm@notefont{}% same as heading font
  \normalfont % body font
}
\let\oldr@@t\r@@t
\def\r@@t#1#2{%
\setbox0=\hbox{$\:\oldr@@t#1{#2\,}$}\dimen0=\ht0
\advance\dimen0-0.2\ht0
\setbox2=\hbox{\vrule height\ht0 depth -\dimen0}%
{\box0\lower0.4pt\box2}}
\LetLtxMacro{\oldsqrt}{\sqrt}
\renewcommand*{\sqrt}[2][]{\oldsqrt[#1]{#2}}
\newcommand\blfootnote[1]{%
  \begingroup
  \renewcommand\thefootnote{}\footnote{#1}%
  \addtocounter{footnote}{-1}%
  \endgroup
}
\begin{document}
\title{Existence Serrin type results for the Dirichlet problem for the prescribed mean curvature equation in Riemannian manifolds}
% Running title: The Dirichlet problem for the prescribed mean curvature equation in manifolds
\author{Yunelsy N. Alvarez \and	Ricardo Sa Earp}
\maketitle

\blfootnote{\emph{2000 AMS Subject Classification:} 53C42, 49Q05, 35J25, 35J60.}
\blfootnote{\emph{Keywords and phrases:} {mean curvature equation;} {Dirichlet problems;} {Serrin condition;} {distance function;} {hyperbolic space;} {maximum principle}; {Ricci curvature.}}

\begin{abstract}
%Given a complete $n$-dimensional Riemannian manifold $M$, we study the existence of vertical graphs in $M\times\R$ with prescribed mean curvature $H=H(x,z)$. 
%Precisely, we prove that the Dirichlet problem for the vertical mean curvature equation in a smooth bounded domain $\W\subset M$ has solution for arbitrary smooth boundary data if $(n-1)\Hc_{\partial\W}(y)\geq n\ds\sup_{z\in\R}\modulo{H(y,z)}$ for each $y\in\partial\W $ provided the function $H$ also satisfies $\Ricc_x\geq n\sup\limits_{z\in\R}\norm{\nabla_x H(x,z)}-\dfrac{n^2}{n-1}\ds\inf_{z\in\R}\left(H(x,z)\right)^2$ for each $x\in\W$.  
%In the case where $M=\HH^n$ we also establish an existence result if the condition $\sup\limits_{\W\times\R}\modulo{H(x,z)}\leq \frac{n-1}{n}$ holds in the place of the condition involving the Ricci curvature. 
%Finally, we have a related result when $M$ is a Hadamard manifold whose sectional curvature $K$ satisfies $-c^2\leq K\leq -1$ for some $c>1$. 
%We generalize a classical result of Serrin when   the ambient is the Euclidean space. 
Given a complete $n$-dimensional Riemannian manifold $M$, we study the existence of vertical graphs in $M\times\mathbb{R}$ with prescribed mean curvature $H=H(x,z)$. 
Precisely, we prove that the Dirichlet problem for the vertical mean curvature equation in a smooth bounded domain $\Omega\subset M$ has solution for arbitrary smooth boundary data if $(n-1)\mathcal{H}_{\partial\Omega}(y)\geq n\sup\limits_{z\in\mathbb{R}}\left|{H(y,z)}\right|$ for each $y\in\partial\Omega $ provided the function $H$ also satisfies $\mathrm{Ricc}_x\geq n\sup\limits_{z\in\mathbb{R}}\left\|\nabla_x H(x,z)\right\|-\dfrac{n^2}{n-1}\inf\limits_{z\in\mathbb{R}}\left(H(x,z)\right)^2$ for each $x\in\Omega$.  
In the case where $M=\mathbb{H}^n$ we also establish an existence result if the condition $\sup\limits_{\Omega\times\mathbb{R}}\left|{H(x,z)}\right|\leq \frac{n-1}{n}$ holds in the place of the condition involving the Ricci curvature. 
Finally, we have a related result when $M$ is a Hadamard manifold whose sectional curvature $K$ satisfies $-c^2\leq K\leq -1$ for some $c>1$. 
We generalize a classical result of Serrin when   the ambient is the Euclidean space. 
\end{abstract}

%%%%%%%%%%%%%TEXT%%%%%%%%%%%%%%%%%%

\section{Introduction}
\markboth{Y. N. Alvarez and R. Sa Earp}{Introduction}
\label{secIntroducao}

Let $\W$ be a smooth bounded domain in a complete Riemannian manifold $M$ of dimension $n\geq 2$. For a prescribed function $H=H(x,z)$ defined on $\overline{\W}\times\R$, we are interest on find a smooth up to the boundary function $u$ satisfying 
\begin{equation}\label{operador_minimo_1}
\diver \left(\dfrac{\nabla u}{\sqrt{1+\norm{\nabla u}^2}}\right) = nH(x,u)
%\diver \left(\dfrac{\nabla u}{W}\right) = nH(x,u)
\end{equation}
in $\W$ and taking given values $\varphi$ on $\partial\W$. 

The quantities involved in \eqref{operador_minimo_1} are calculated with respect to the metric of $M$. 
%%%%%%%%%%%%%%%%%%%%%%%%%
In a coordinate system $(x_1,\dots,x_n)$ equation \eqref{operador_minimo_1} can be written in non-divergence form as 
\begin{equation}\label{operador_minimo_1_coord}
%\M u=\ds\frac{1}{W}\sum_{i,j=1}^n \left(\sigma^{ij} - \frac{u^iu^j}{W^2} \right)\Hessij u=nH(x,u),
\M u:=\sum_{i,j=1}^n \left(W^2\sigma^{ij} - {u^iu^j}\right)\Hessij u=nH(x,u){W^3} ,
\end{equation}
where $W=\sqrt{1+\norm{\nabla u (x)}^2}$, $(\sigma^{ij})$ is the inverse of the metric $(\sigma_{ij})$ of $M$, $u^i=\ds\sum_{j=1}^n\sigma^{ij} \Dj u$ are the coordinates of $\nabla u$ and $\Hessij u(x)=\Hess u(x){\left(\Ei,\Ej\right)}$. 
We also define the operator $\Q$ by
\begin{equation}\label{oper_Q}
\Q u := \M u -n H(x,u)W^3.
\end{equation}

The matrix of the operator $\M$ (and $\Q$) is given by $A={W^2}g$, where $g$ is the induced metric on the graph of $u$. This implies that the eigenvalues of $A$ are positive and depend on $x$ and $\nabla u$. Hence, $\M$ is locally uniformly elliptic. Furthermore, if $\W$ is bounded and $u\in\cl^{1}(\overline{\W})$, then $\M$ is uniformly elliptic in $\overline{\W}$ (see \cite{spruck} for more details).

We recall that the Dirichlet problem for equation \eqref{operador_minimo_1} is a classical problem in the intersection between Differential Geometry and Partial Differential Equations. First steps were given by Bernstein \cite{Bernstein}, Douglas \cite{Douglas1931} and Rad\'o \cite{Rado1930} %\cite[p. 795]{Rado1930} 
in domains of $\R^2$ for the minimal case. In 1966 Jenkins-Serrin {\cite[Th. 1 p. 171]{Serrin1968}} derived a related result in higher dimensions. 

%\noindent An interior gradient estimate due to Bombieri-De Giorgi-Miranda \cite{Bombieri1969} proves that Jenkins-Serrin's solutions exist for all continuous boundary data. %

Later on, Serrin \cite{Serrin} devoted his attention to study Dirichlet problems for a class of more general elliptic equations within which is the prescribed mean curvature equation. Specifically related to our work, he obtained the following result.
\begin{teo}[Serrin {\cite[Th. p. 484]{Serrin}}]\label{T_Serrin_Ricci}
Let $\W\subset\R^n$ be a bounded domain whose boundary is of class $\cl^2$. Let $H(x)\in\cl^1(\overline{\W})$ and suppose that 
\begin{equation}\label{cond_Ricc_Serrin}
\modulo{\nabla H(x)}\leq \dfrac{n}{n-1}(H(x))^2\ \forall\ x\in\W.
\end{equation}
Then the Dirichlet problem in $\W$ for surfaces having prescribed mean curvature $H(x)$ is uniquely solvable for arbitrarily given $\cl^2$ boundary values if, and only if,
\begin{equation}\label{SerrinCondition}
(n-1)\Hc_{\partial\W}(y)\geq n\modulo{H(y)} \ \forall \ y\in\partial\W,
\end{equation}
where $\Hc_{\partial\W}$ denotes the inward mean curvature of $\partial\W$. 
\end{teo}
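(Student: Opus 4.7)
The plan is to proceed by the standard continuity method in Hölder spaces. I would first dispatch the necessity of the Serrin condition \eqref{SerrinCondition}: given any boundary point $y_0\in\partial\W$ where it fails, a local barrier argument (comparing to a spherical cap of mean curvature slightly smaller than $H(y_0)$ tangent to $\partial\W$ at $y_0$ from inside) shows that sufficiently large boundary data near $y_0$ admit no solution. Uniqueness is immediate from the comparison principle applied to $\M$ (the operator is elliptic and quasi-linear in divergence form, so if $u,v$ are two solutions with the same boundary data, $u-v$ attains its extrema on $\partial\W$).

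For sufficiency I would embed \eqref{ProblemaP} in the one-parameter family
\begin{equation*}
\M u=tnH(x)W^3 \ \text{in}\ \W, \qquad u=t\varphi \ \text{on}\ \partial\W, \qquad t\in[0,1],
\end{equation*}
and show that the set $T=\{t\in[0,1]\colon \text{a }\cl^{2,\alpha}\text{ solution }u_t\text{ exists}\}$ is nonempty, open and closed. Nonemptiness is clear ($0\in T$, with $u_0\equiv 0$). Openness is a direct application of the implicit function theorem on $\cl^{2,\alpha}(\overline{\W})$: the linearization of $\Q$ at any solution is a linear uniformly elliptic operator with nonpositive zeroth order term (since $H=H(x)$ does not depend on $z$, the zeroth order term vanishes; otherwise the monotonicity of $H$ in $z$ would give the right sign), so it is invertible. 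Closedness reduces, via standard Schauder theory and the Krylov--Safonov / Ladyzhenskaya--Ural'tseva machinery for quasi-linear equations, to establishing $t$-independent a priori bounds in $\cl^{1}(\overline{\W})$.

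Thus the crux is the $\cl^1$ a priori estimate, broken into four pieces:
\begin{enumerate}[label=(\roman*)]
\item A $\cl^0$ bound $\sup_{\W}|u_t|\leq C_0$, obtained by comparing $u_t$ with constant supersolutions and subsolutions derived from $\sup|\varphi|$ and the sign of $H$.
\item A boundary gradient bound $\sup_{\partial\W}|\nabla u_t|\leq C_1$. Here I would construct upper and lower barriers of the form $\psi(x)=\varphi(y(x))+f(\dist(x,\partial\W))$, where $y(x)$ is the nearest-point projection and $f$ is a concave function with $f(0)=0$, $f'(0)$ large, $f'(\delta)$ bounded. The key computation shows that such a $\psi$ is a supersolution in a tubular neighborhood of $\partial\W$ provided $(n-1)\Hc_{\partial\W}\geq n|H|$; this is precisely where \eqref{SerrinCondition} enters.
\item An interior gradient bound $\sup_{\W}|\nabla u_t|\leq C_2$. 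This is the Bernstein-type step, where one applies the maximum principle to an auxiliary function such as $W\,e^{\lambda u}$ (or $\log W+\lambda u$) on the graph, using the equation and the Gauss/Codazzi relations for the graph of $u_t$ in $\R^n\times\R$. The condition \eqref{cond_Ricc_Serrin} is engineered so that the bad terms in the resulting differential inequality have the correct sign, yielding a finite bound for $W$ at an interior maximum.
\item Standard bootstrapping from $\cl^1$ to $\cl^{2,\alpha}$ via quasi-linear Schauder theory.
\end{enumerate}

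The main obstacle I expect is step (iii): the Bernstein calculation is algebraically delicate, and the constant $\frac{n}{n-1}$ in \eqref{cond_Ricc_Serrin} is sharp because it comes from balancing the trace term $nH^2$ produced by the second fundamental form of the graph against the transport term $\langle\nabla H,\nu\rangle$ that the differentiation of the equation introduces. The other nontrivial point is the barrier construction in (ii), where one must verify that the concavity parameters of $f$ can be chosen uniformly along $\partial\W$, using compactness and the $\cl^2$ regularity of $\partial\W$.
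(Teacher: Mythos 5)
You should first notice that the paper does not actually prove Theorem~\ref{T_Serrin_Ricci}; it is stated as a cited classical result of Serrin. What the paper proves is the generalization Theorem~\ref{T_exist_Ricci}, via Theorems~\ref{teo_Est_altura}, \ref{teo_Est_gradiente_fronteira} and \ref{teo_Est_global_gradiente}, and its architecture --- one-parameter family $P_\tau$, height estimate, boundary gradient estimate, interior gradient estimate, Schauder and the continuity method, uniqueness by the maximum principle, necessity deferred to a separate non-existence result --- is exactly the one you outline. So the skeleton of your proposal is sound and matches the paper's treatment of the generalizing theorem.

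There is, however, a genuine mix-up in where the structural hypothesis does the work, and it would defeat step~(i) of your plan. You claim the $\cl^0$ bound comes from ``constant supersolutions and subsolutions derived from $\sup|\varphi|$ and the sign of $H$.'' A constant $c$ has $\M c = 0$, so $c$ is a supersolution of $\M u = nH W^3$ only where $H\geq 0$ and a subsolution only where $H\leq 0$; for a sign-changing $H$ (or even a one-signed nonzero $H$) constants do not give a two-sided bound. The height estimate in fact requires the distance-function barrier $w=\phi\circ d + \sup_{\partial\W}|\varphi|$ with an exponential $\phi$, and it is precisely at this step that \emph{both} the Serrin condition~\eqref{SerrinCondition} and the structure condition~\eqref{cond_Ricc_Serrin} are used: together (via a Riccati comparison along inward normals, Lemma~\ref{lema_Est_altura_cond_ricci} in the paper's generalized setting) they yield $\Delta d \leq -n|H|$ where $d$ is smooth, which is what makes $w$ a supersolution. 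Conversely, your step~(iii) misattributes the role of~\eqref{cond_Ricc_Serrin}: the Bernstein-type interior gradient estimate (Theorem~\ref{teo_Est_global_gradiente}) only needs a bound on $\norm{H}_1$ and a lower Ricci bound, not the sharp inequality $|\nabla H|\leq\frac{n}{n-1}H^2$. In short, you placed the condition~\eqref{cond_Ricc_Serrin} in the Bernstein step when it actually lives in the height and boundary-gradient barriers, and the constant-barrier device you propose in~(i) would not close.
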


Since \eqref{cond_Ricc_Serrin} is satisfied by every $H\in\R$, it follows that the \textit{Serrin condition} \eqref{SerrinCondition} is a necessary and sufficient condition for graphs with constant mean curvature to exist over bounded domains of the Euclidean ambient space. Actually, the following classical sharp solvability criteria holds.
\begin{teo}[Serrin {\cite[p. 416]{Serrin}}]\label{SharpSerrin} % \cite[T. 16.11 p. 409]{GT}
Let $\W\subset\R^n$ be a bounded domain whose boundary is of class $\cl^2$. Then for every constant $H$ the Dirichlet problem for equation $\M u=nH$ in $\W$ has a unique solution for arbitrary $\cl^2$ boundary data if, and only if, 
$(n-1) \Hc_{\partial\W} \geq n \modulo{H}$.
%\begin{equation}\label{cond_serrin_constante}
%(n-1) \Hc_{\partial\W}(y) \geq n H \ \ \forall\ y\in\partial\W.
%\end{equation}
\end{teo}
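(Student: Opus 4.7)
My plan is to deduce this sharp criterion as an immediate specialisation of Theorem \ref{T_Serrin_Ricci}. The key observation is that when $H$ is constant, $\nabla H\equiv 0$, so the structural condition \eqref{cond_Ricc_Serrin} reduces to $0\leq \tfrac{n}{n-1}H^2$ and is automatically satisfied. Hence the hypotheses of Theorem \ref{T_Serrin_Ricci} hold in the constant-$H$ case, and its ``if and only if'' clause specialises verbatim to the desired statement: solvability of the Dirichlet problem for arbitrary $\cl^2$ boundary data is equivalent to the boundary curvature inequality $(n-1)\Hc_{\partial\W}\geq n|H|$.

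To see why each direction of Theorem \ref{T_Serrin_Ricci} indeed covers this case, I would briefly recall the underlying strategy. For sufficiency, one runs the method of continuity along the family $\tau H$, $\tau\in[0,1]$, which requires uniform $\cl^{1,\alpha}(\overline{\W})$ a priori estimates for the corresponding solutions. Height and interior gradient bounds follow from the divergence structure of the mean curvature operator, while the crucial boundary gradient estimate is produced by upper and lower barriers built from the signed distance to $\partial\W$; it is precisely at this point that the Serrin condition $(n-1)\Hc_{\partial\W}\geq n|H|$ enters. For necessity, one argues by contradiction: if $(n-1)\Hc_{\partial\W}(y_0)<n|H|$ at some $y_0\in\partial\W$, then the spherical cap of constant mean curvature $H$ and radius $1/|H|$ tangent to $\partial\W$ at $y_0$ cannot be placed inside a graph-type solution with suitably chosen boundary data, and the strong maximum principle forces an interior tangential contact that contradicts the strict curvature inequality.

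The main conceptual obstacle in a fully self-contained treatment would lie in this necessity argument, which requires a careful tangential-comparison construction with barriers of the correct geometric meaning. Since, however, Theorem \ref{T_Serrin_Ricci} already supplies both implications under the hypothesis \eqref{cond_Ricc_Serrin}, the only thing to verify here is the trivial fact that every constant $H$ belongs to that admissible class; once this is noted, the sharp criterion follows with no additional analytic work.
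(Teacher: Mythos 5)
Your proposal is correct and follows exactly the paper's approach: the paper presents Theorem~\ref{SharpSerrin} as ``a direct consequence'' of Theorem~\ref{T_Serrin_Ricci}, and the key observation --- that a constant $H$ has $\nabla H\equiv 0$ so that condition~\eqref{cond_Ricc_Serrin} holds trivially --- is precisely what you state. The additional commentary you supply on the internal mechanics of Serrin's proof (continuity method, barriers, tangential comparison for necessity) is accurate background but not needed once the specialisation is noted.
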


The main goal of the present paper is to generalize to more general ambient spaces the existence part in Theorem \ref{T_Serrin_Ricci}. Specifically, we prove the following. 
\begin{teo}[Main theorem]\label{T_exist_Ricci}
Let $\Omega \subset M$ be a bounded domain with $\partial\W$ of class $\cl^{2,\alpha}$ for some $\alpha\in(0,1)$. 
Let $H\in\cl^{1,\alpha}(\overline{\W}\times\R)$ satisfying $\Dz H \geq 0$ and
\begin{equation}\label{cond_H_Ricci_exist}
\Ricc_x\geq n\sup\limits_{z\in\R}\norm{\nabla_x H(x,z)}-\dfrac{n^2}{n-1}\ds\inf_{z\in\R}\left(H(x,z)\right)^2 \ \forall \ x\in\W.
\end{equation}
If 
\begin{equation}\label{StrongSerrinCondition_mainThm}
(n-1)\Hc_{\partial\W}(y)\geq n \sup\limits_{z\in\R}\modulo{H\left(y,z\right)} \ \forall \ y\in\partial\W,
\end{equation}
then for every $\varphi\in\cl^{2,\alpha}(\overline{\W})$ there exists a unique solution $u\in\cl^{2,\alpha}(\overline{\W})$ of the Dirichlet problem for equation \eqref{operador_minimo_1}.
\end{teo}
\noindent In the statement above $\Ricc_x$ is the Ricci curvature\footnote{The definition of the Ricci curvature we use throughout the text follows \cite{petersen1998}. The notation $\Ricc_x \geq f(x)$ means that the Ricci curvature evaluated in any unitary tangent vector at $x$ is bounded below by the function $f(x)$.} of $M$ at $x$. 
Note that relation \eqref{cond_Ricc_Serrin} is a particular case of \eqref{cond_H_Ricci_exist}. 

\medskip
On the other hand, in a previous work %\cite[Th. 1 p. 3]{artigonaoexist} %(see also \cite[Ths. 2.5 and 2.6]{minhatese}) 
we have proved that the {\it strong Serrin condition} \eqref{StrongSerrinCondition_mainThm} is sharp in every Hadamard manifold (see \cite[Corollary 2 p. 3]{artigonaoexist}). The combination of this non-existence result with Theorem \ref{T_exist_Ricci} fully generalizes Theorem \ref{T_Serrin_Ricci} to every Hadamard manifold in the $\cl^{2,\alpha}$ class (see also \cite[Th. 10 p. 6]{artigonaoexist}).  
We have also proved that \eqref{StrongSerrinCondition_mainThm} is sharp in every compact and simply connected manifold which is strictly $1/4-$pinched\footnote{A Riemannian manifold is said to be strictly $1/4-$pinched if the sectional curvature $K$ of $M$ satisfies $0<\frac{1}{4} K_0 < K \leq K_0$.} provided $\diam(\W)<\frac{\pi}{2\sqrt{K_0}}$ (see \cite[Corollary 3 p. 4]{artigonaoexist}). A further Serrin type solvability criteria is directly obtained by combining this non-existence result with Theorem \ref{T_exist_Ricci}.

%Observe that relation \eqref{cond_Ricc_Serrin} is a particular case of \eqref{cond_H_Ricci_exist}. 
Theorem \ref{T_exist_Ricci} also generalizes a result proved by Spruck for $H\in\R$ in the $M\times\R$ setting (see {\cite[T 1.4 p. 787]{spruck}}). In this case of constant mean curvature the Serrin condition by itself is not always sufficient for the solvability of the Dirichlet problem for equation \eqref{operador_minimo_1} as it happens in the Euclidean space. By way of illustrating better this fact notice that when $M=\HH^n$ and $H\in\R$, relation \eqref{cond_H_Ricci_exist} reduces to $\modulo{H}\geq \frac{n-1}{n}$. In the opposite case $\modulo{H}<\frac{n-1}{n}$, Spruck stated an existence theorem assuming that the strict inequality $(n-1) \Hc_{\partial\W} > n \modulo{H}$ holds (see \cite[Th. 5.4 p. 797]{spruck}). In this paper we also extend this result of Spruck in the hyperbolic space including the inequality even for not necessarily constant $H$, as can be seen in the following theorem. 
\begin{teo}\label{exist_hiperbolicoHmenor1}
Let $\W\subset \HH^n$ be a bounded domain with $\partial\W$ of class $\cl^{2,\alpha}$ for some $\alpha\in(0,1)$ and $\varphi\in\cl^{2,\alpha}(\overline{\W})$. Let $H\in\cl^{1,\alpha}(\overline{\W}\times\R)$ satisfying $\Dz H\geq 0$ and $\sup\limits_{\W\times\R}\modulo{H}\leq \frac{n-1}{n}$. 
If 
$$%\begin{equation}\label{StrongSerrinCondition}
(n-1)\Hc_{\partial\W}(y)\geq n \sup\limits_{z\in\R}\modulo{H\left(y,z\right)} \ \forall \ y\in\partial\W,
$$%\end{equation}
then for every $\varphi\in\cl^{2,\alpha}(\overline{\W})$ there exists a unique solution $u\in\cl^{2,\alpha}(\overline{\W})$ of the Dirichlet problem for equation \eqref{operador_minimo_1}.
\end{teo}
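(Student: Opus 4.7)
The plan is to solve \eqref{ProblemaP} by the continuity method, following the same overall scheme as in the proof of Theorem~\ref{T_exist_Ricci}. Consider the one-parameter family $(P_t)_{t\in[0,1]}$ obtained by replacing $(H,\varphi)$ with $(tH,t\varphi)$; at $t=0$ the zero function is a solution, so the solvability set is non-empty. Openness of this set in $[0,1]$ follows from the implicit function theorem applied to the linearization of $\Q$, which is uniformly elliptic and invertible thanks to $\Dz H\geq 0$; closedness reduces to uniform a priori $\cl^{1}(\overline{\W})$ estimates along the family, which upgrade to $\cl^{2,\alpha}(\overline{\W})$ bounds by standard quasilinear elliptic regularity theory.

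The $\cl^{0}$ (height) estimate and the boundary gradient estimate proceed essentially as in Theorem~\ref{T_exist_Ricci}. For the height bound, I would apply the maximum principle comparing $u_t$ with vertical translates of $\varphi$, exploiting $\Dz H\geq 0$. For the boundary gradient bound, I would construct local upper and lower barriers of the form $\varphi\pm f\bigl(d(\cdot,\partial\W)\bigr)$ in a tubular neighborhood of $\partial\W$, where $d$ is the distance to $\partial\W$, using the strong Serrin condition $(n-1)\Hc_{\partial\W}\geq n\sup_z|H|$ together with the Hessian comparison for $d$ in $\HH^n$, which is favorable because of negative sectional curvature. Neither step invokes the Ricci-type hypothesis \eqref{cond_H_Ricci_exist}.

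The crux, and the main obstacle, is the interior gradient estimate. In the proof of Theorem~\ref{T_exist_Ricci} this step rests on \eqref{cond_H_Ricci_exist}, but in $\HH^n$ one has $\Ricc\equiv -(n-1)$, while $\sup|H|\leq \tfrac{n-1}{n}$ forces $\tfrac{n^2}{n-1}\inf_z H^2\leq n-1$, precisely the opposite of what \eqref{cond_H_Ricci_exist} requires. I would replace the Ricci argument with the hyperbolic-specific approach of Spruck~\cite[Thm.~5.4]{spruck}, adapted here to non-constant $H$: apply the maximum principle to an auxiliary function on the graph of the form
$$
\psi \;=\; \log W \,+\, g(u),
$$
where $g$ is an increasing function calibrated to the height bound. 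Computing $\Q\psi$ via a Bochner-type identity on the graph, the Ricci contribution from $\HH^n$ is the constant $-(n-1)$, the Gauss-equation term yields $|A|^{2}\geq nH^{2}$, the $\nabla_x H$ term is absorbed by $g''(u)$, and the $\Dz H\geq 0$ contribution has favorable sign.

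The arithmetic of the interior estimate closes precisely because the pointwise bound $|H|\leq \tfrac{n-1}{n}$ saturates the Cauchy--Schwarz-type inequality needed to dominate the bad terms; this yields $W(x_0)\leq C$ at an interior maximum of $\psi$, and combined with the boundary estimate gives the desired $\cl^{1}$ control, closing the continuity method. The main technical difficulty lies in verifying that the same choice of $g$ works simultaneously against the height bound of step~one, the $\cl^{1,\alpha}$ dependence of $H$ on $(x,z)$, and the borderline equality case $|H|=\tfrac{n-1}{n}$, which is exactly where Spruck's original argument required strict inequality.
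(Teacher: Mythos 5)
Your proposal misidentifies where the Ricci hypothesis \eqref{cond_H_Ricci_exist} actually enters the argument for Theorem~\ref{T_exist_Ricci}, and as a result you target the wrong step and leave the genuinely affected steps unrepaired.

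In the paper the global (interior) gradient estimate, Theorem~\ref{teo_Est_global_gradiente}, does \emph{not} use \eqref{cond_H_Ricci_exist}: it only needs \emph{some} lower bound $\Ricc\geq -R$ on the ambient Ricci curvature, which is automatic in $\HH^n$ with $R=n-1$. So the auxiliary-function/Bochner argument you sketch with $\psi=\log W+g(u)$ is solving a problem that does not exist here; the paper simply re-applies Theorem~\ref{teo_Est_global_gradiente} verbatim. Conversely, \eqref{cond_H_Ricci_exist} is used twice in the proof of Theorem~\ref{T_exist_Ricci} where you assume nothing special is needed: (i) in the \emph{height} estimate (Theorem~\ref{teo_Est_altura}), via Lemma~\ref{lema_Est_altura_cond_ricci}, to guarantee that $\Delta d\leq -n|H(\cdot,\varphi(y))|$ on the distance neighborhood, and (ii) in the \emph{boundary gradient} estimate (Theorem~\ref{teo_Est_gradiente_fronteira}), to force the mean curvature of the inward parallels to be nondecreasing, i.e.\ $\Delta d(x)\leq\Delta d(y)$. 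Your claim that these two steps ``proceed essentially as in Theorem~\ref{T_exist_Ricci}'' and ``neither invokes the Ricci-type hypothesis'' is therefore not accurate, and comparing $u$ against vertical translates of $\varphi$ does not produce a height bound since $\varphi$ is not a sub/supersolution.

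What the paper actually does to repair these two steps in $\HH^n$ under $\sup|H|\leq\tfrac{n-1}{n}$: for the height estimate, it invokes the entire vertical graphs of constant mean curvature $\tfrac{n-1}{n}$ in $\HH^n\times\R$ (B\'erard--Sa~Earp) and the convex hull lemma, so vertical translates of these CMC graphs serve as barriers — this is exactly how the borderline case $|H|=\tfrac{n-1}{n}$ is reached, not by refining Spruck's interior estimate. For the boundary gradient estimate it uses the explicit hyperbolic formulas \eqref{curv_princ_paral_explicito}--\eqref{der_curv_princ_hiperb}: note these show that when $|\lambda_i(0)|<1$ the mean curvature of the parallels \emph{decreases} inward, which is the opposite of the ``favorable Hessian comparison'' you assert. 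The paper instead controls the loss linearly, $|\Hc_{\partial\W}(y)-\Hc_{d(x)}(x)|\leq\kappa\, d(x)$, and absorbs it into the barrier construction (Theorem~\ref{teo_Est_gradiente_fronteira_hiperbólico}). These are the two missing ideas; with them in place the continuity-method scaffolding you describe is fine.
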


We note that our non-existence result for Hadamard manifolds \cite[Cor. 1 p. 3]{artigonaoexist} guaranties that Theorem \ref{exist_hiperbolicoHmenor1} is sharp (see also \cite[Th. 7 p. 5]{artigonaoexist}). Besides, putting together 
%theorem \ref{exist_hiperbolicoHmenor1} and our non existence result for Hadamard manifolds \cite[Cor. 1 p. 3]{artigonaoexist} with 
Spruck's existence theorem \cite[Th. 1.4 p. 787]{spruck} one can deduce that the Serrin sharp solvability criterion for constant $H$ stated in Theorem \ref{SharpSerrin} also holds in the hyperbolic space (see also \cite[Th. 8 p. 5]{artigonaoexist}). 
%\begin{teo}[Sharp Serrin type solvability criterion {\cite[Th. 7 p. 5]{artigonaoexist}}]
%Let $\W\subset \HH^n$ be a bounded domain whose boundary is of class {$\cl^{2,\alpha}$} for some $\alpha\in(0,1)$. Then for every constant $H$ the Dirichlet problem for equation \eqref{operador_minimo_1} has a unique solution arbitrary continuous boundary data if, and only if, $(n-1) \Hc_{\partial\W}(y) \geq n \modulo{H}$.
%\end{teo}

\medskip

At last, we use the barriers constructed by Galvez-Lozano \cite[Th. 6 p. 12]{Galvez2015} to prove the following result in Hadamard manifolds. 
\begin{teo}\label{teo_GalvezLozano}
Let $M$ be a Hadamard manifold with sectional curvature pinched between $-c^2$ and $-1$ for some $c>1$. Let $\W\subset M$ be a bounded domain with $\partial\W$ of class $\cl^{2,\alpha}$ for some $\alpha\in(0,1)$ and whose principal curvatures are greater than $c$. Let $\varphi\in\cl^{2,\alpha}(\overline{\W})$ and $H\in\cl^{1,\alpha}(\overline{\W}\times\R)$ satisfying $\Dz H\geq 0$ and $\sup\limits_{\W\times\R}\modulo{H}\leq \frac{n-1}{n}$. 
Then the Dirichlet problem for equation \eqref{operador_minimo_1} has a unique solution $u\in\cl^{2,\alpha}(\overline{\W})$.
\end{teo}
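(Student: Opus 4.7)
The plan is to apply the continuity method, as in the proofs of Theorems \ref{T_exist_Ricci} and \ref{exist_hiperbolicoHmenor1}. Consider the one-parameter family of problems with mean curvature $H_\tau(x,z)=\tau H(x,z)$ and boundary data $\tau\varphi$, for $\tau\in[0,1]$, and let $I\subset[0,1]$ denote the set of parameters for which the corresponding Dirichlet problem admits a $\cl^{2,\alpha}(\overline{\W})$ solution. Clearly $0\in I$ (take $u\equiv 0$). I aim to show $I$ is both open and closed in $[0,1]$, whence $1\in I$.

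Uniqueness follows from the comparison principle for $\Q$, which applies because $\Dz H\geq 0$. Openness of $I$ follows from the implicit function theorem in Banach spaces: the linearization of $\Q$ at a solution is a uniformly elliptic linear operator whose zeroth-order coefficient is $-n\,\Dz H\,W^3\leq 0$, hence it is invertible from $\cl^{2,\alpha}(\overline{\W})$ onto $\cl^{0,\alpha}(\overline{\W})$ with Dirichlet data. Closedness of $I$ reduces, by standard quasilinear elliptic theory (Ladyzhenskaya--Ural'tseva plus Schauder), to obtaining $\tau$-uniform a priori bounds on $\norm{u}_{\cl^0}$, $\norm{\nabla u}_{\cl^0(\partial\W)}$, and $\norm{\nabla u}_{\cl^0(\W)}$.

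For the $\cl^0$ and boundary gradient estimates I invoke Gálvez--Lozano \cite[Th. 6 p. 12]{Galvez2015}: the assumption $-c^2\leq K\leq -1$ together with the hypothesis that the principal curvatures of $\partial\W$ exceed $c$ permits the construction, at each boundary point $y\in\partial\W$, of upper and lower barriers whose mean curvature dominates $\frac{n-1}{n}$ in absolute value. Since $\sup\modulo{H}\leq\frac{n-1}{n}$, sliding these barriers vertically by the $\cl^0$ oscillation of $\varphi$ and applying the comparison principle yields simultaneously $\norm{u}_{\cl^0(\overline{\W})}\leq C$ and the boundary gradient bound $\norm{\nabla u}_{\cl^0(\partial\W)}\leq C$, independent of $\tau$. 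The interior gradient estimate is obtained by adapting Spruck's maximum principle argument \cite[Sect. 3]{spruck}: the relevant Bochner-type computation for $\log W$ on the graph involves $\Ricc$ and a term controlled by $\norm{\nabla H}$; the lower bound $K\geq -c^2$ gives a uniform Ricci lower bound on $\W$, while $H\in\cl^{1,\alpha}$ gives $\norm{\nabla H}$ bounded, so the standard Korevaar--Simon cutoff on the graph yields $\norm{\nabla u}_{\cl^0(\W)}\leq C\big(1+\norm{\nabla u}_{\cl^0(\partial\W)}\big)$.

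The main obstacle I anticipate is the interior gradient estimate: unlike the hyperbolic situation of Theorem \ref{exist_hiperbolicoHmenor1}, here $M$ is only pinched between two space forms, so the Jacobi-field comparisons and the cut-locus issues that Spruck handles in $\HH^n$ must be replaced by Rauch/Hessian-of-distance comparisons valid under $-c^2\leq K\leq -1$. Once this is in place, the Krylov--Safonov $\cl^{1,\alpha}$ estimate and Schauder theory upgrade the $\cl^1$ bound to a $\cl^{2,\alpha}$ bound uniform in $\tau$, closing the continuity argument and concluding the proof.
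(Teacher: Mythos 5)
Your high-level plan (continuity method, reducing closedness to a height estimate, a boundary gradient estimate and an interior gradient estimate) matches the paper, and the height estimate via vertical translates of the Gálvez--Lozano graph of constant mean curvature $\frac{n-1}{n}$ and zero boundary data is exactly what the paper does.

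The gap is in the boundary gradient estimate. You propose to ``slide the barriers vertically by the $\cl^0$ oscillation of $\varphi$'' and thereby get both $\norm{u}_{\cl^0}$ and $\norm{\nabla u}_{\cl^0(\partial\W)}$. This does not work: the Gálvez--Lozano graph $v$ has \emph{zero} boundary data, so $v+\sup_{\partial\W}\varphi\geq u$ touches $u$ on $\partial\W$ only at points where $\varphi$ attains its maximum, and no gradient bound is obtained at the other boundary points. A boundary gradient estimate for arbitrary $\varphi$ requires a local barrier built around $\varphi$ itself, not a global barrier shifted by a constant. The paper supplies this via a short but essential observation you omit: the pinching $-c^2\leq K\leq -1$ gives $\Ricc_x\geq -(n-1)c^2$, the hypothesis on the principal curvatures gives $\Hc_{\partial\W}>c$, and hence the strong Serrin condition holds trivially,
$$(n-1)\Hc_{\partial\W}(y)>(n-1)c>n-1\geq n\sup_{\W\times\R}\modulo{H(x,z)};$$
with these three facts Theorem \ref{teo_Est_gradiente_fronteira_paraGalvez} (barriers of the form $\pm\psi\circ d+\varphi$ under a Ricci lower bound, boundary mean-convexity and the Serrin condition) delivers the boundary gradient estimate. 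You should invoke that theorem rather than trying to extract the gradient bound from the Gálvez--Lozano barrier.

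Finally, the ``main obstacle'' you anticipate — adapting Spruck's interior gradient estimate to the pinched manifold via Rauch/Hessian-of-distance comparisons — is a non-issue here: Theorem \ref{teo_Est_global_gradiente} already proves the interior gradient estimate for any Riemannian manifold with a Ricci lower bound, using the auxiliary function $\norm{\nabla u}e^{Au}$, and $K\geq -c^2$ gives $\Ricc\geq -(n-1)c^2$. With these two substitutions your continuity argument is correct and coincides with the paper's.
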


\paragraph{Acknowledgements.} The authors were partially supported by the Conselho Nacional de Desenvolvimento Científico e Tecnológico - Brasil (CNPq). The first author was also financed in part by the Coordenação de Aperfeiçoamento de Pessoal de Nível Superior - Brasil (CAPES) - Finance Code 001.

\section{The a priori estimates}

In order to prove the theorems stated in the introduction we use the classical Leray-Schauder degree theory. In this section we derive the a priori estimates for the solution of the Dirichlet problem
%\begin{equation}\tag{$P$}\label{ProblemaP}
%\left\{\begin{array}{l}
 %\diver\left(\dfrac{\nabla u}{W}\right)  =  nH(x,u)\ \mbox{ in }\ \W, \\
 %\phantom{\diver\left(\dfrac{\nabla u}{W}\right)}      u  =  \varphi \ \mbox{ in }\  \partial\Omega.
%\end{array}\right.
%\end{equation}
\begin{equation}\tag{$P$}\label{ProblemaP}
\left\{\begin{split}
 \diver\left(\dfrac{\nabla u}{W}\right)  &=  nH(x,u)\ \mbox{ in }\ \W, \\
 \phantom{\diver\left(\dfrac{\nabla u}{W}\right)}      u  &= \varphi \ \mbox{ in }\  \partial\Omega.
\end{split}\right.
\end{equation}

%{\color{red}Recalling the operator $\Q$ defined in \eqref{oper_Q}, the equation $\Q u=0$ is used in this section to facilitate our calculations since equations \eqref{operador_minimo_1} and \eqref{operador_minimo_1_coord} are equivalent.} 
Since \eqref{operador_minimo_1} and \eqref{operador_minimo_1_coord} are equivalent, the operator $\Q$ defined in \eqref{oper_Q} and the equation $\Q u=0$ are used in this section in order to facilitate the calculations.

Firstly, we establish a lemma that will help us to obtain the a priori height and boundary gradient estimates. 
{%\color{red}
\begin{lema}\label{lema_Est_altura_cond_ricci}
Let $\Gamma$ be an embedded and compact $\cl^2$ hypersurface of $M$ oriented by a global unit normal $N$. Let $\tau>0$ be such that 
%$\Phi_t(x)=\exp^{\bot}(x,tN_x)$ 
$$ \funciones{\Phi_t}{\Gamma}{\Gamma_t\subset M}{x}{\exp^{\bot}(x,tN_x)}$$
is a diffeomorphism between $\Gamma$ and $\Gamma_t$ for each $t\in[0,\tau)$. For $y\in\Gamma$ fixed, let $\gamma_y(t)=\exp_{y}(tN_y)$ with  $0\leq t\leq \tau$. If there exists a function $h\in\cl^1[0,\tau)$ such that
\begin{equation}\label{der_H_3}
\Hc_{\Gamma}(y)\geq \modulo{h(0)}
\end{equation}
and
\begin{equation}\label{der_H_3_2}
\Ricc_{\gamma_y(t)}(\gamma_y'(t))\geq (n-1)\left(\modulo{h'(t)}-(h(t))^2\right)  \  \forall \ t\in[0,\tau),
\end{equation} 
then 
\begin{equation}\label{est_laplaciano_est_altura_1_lema}
\Hc_{\Gamma_t}(\gamma_y(t))\geq \modulo{h(t)} \  \forall \ t\in[0,\tau),
%-(n-1)\Hc_t(x)=\Delta d(x)\leq -n \modulo{H(x )} \ \forall \ x\in\W_0.
\end{equation} 
where $\Hc_{\Gamma_t}(\gamma_y(t))$ is the mean curvature of $\Gamma_t$ at $\gamma_y(t)$ computed with respect to $\gamma_y'(t)$. Furthemore, $\Hc_{\Gamma_t}(\gamma_y(t))$ is increasing as a function of $t$.
\end{lema}}%
\begin{proof}
First of all note that the hypersurface $\Gamma_t$ is  parallel to $\Gamma$ for each $t\in[0,\tau)$. Let $\Hc(t):=\Hc_{\Gamma_t}(\gamma_y(t))$. It is well known that (see \cite[Th. 2.7 p. 3]{Barbosa1987} and \cite[Cor. B.14 p. 66]{minhatese} for a different proof)
\begin{equation}\label{nextRemark}
\Hc'(t) \geq  \dfrac{\Ricc_{\gamma_y(t)}(\gamma_y'(t))}{n-1} + \left(\Hc(t)\right)^2.
\end{equation}
From \eqref{der_H_3_2} it follows 
\begin{equation}\label{desig_sem_ricc}
\Hc'(t) \geq \modulo{h'(t)} - (h(t))^2 + \left(\Hc(t)\right)^2.
\end{equation}
Then,
\begin{equation}\label{desig_sem_ricc_v}
(\Hc(t) - h(t))'\geq \left(\Hc(t)+h(t)\right)\left(\Hc(t)-h(t)\right)
\end{equation}
and
\begin{equation}\label{desig_sem_ricc_g}
(\Hc(t) + h(t))'\geq \left(\Hc(t)-h(t)\right)\left(\Hc(t)+h(t)\right).
\end{equation}
Let us define $v(t)=\Hc(t)-h(t)$ and $g(t)=\Hc(t)+h(t)$. 
Inequality \eqref{desig_sem_ricc_v} yields
%$$v'(t) \geq  g(t) v(t)$$
%Multiplicando a desigualdade por $\ds e^{\int_0^t g(s)ds}$, resulta 
$$\left(\dfrac{v(t)}{\ds e^{\int_0^t g(s)ds}}\right)'\geq 0,$$
so 
%$$\ds\dfrac{v(t)}{\ds e^{\int_0^t g(s)ds}}\geq v(0).$$
$v(t) \geq\ds v(0)e^{\int_0^t g(s)ds}$ for each $t\in[0,\tau)$. As a consequence of \eqref{der_H_3}, 
%\begin{equation}\label{des_v}
$$\Hc(t)\geq h(t) \ \forall t\in[0,\tau).$$
%\end{equation}

Using \eqref{desig_sem_ricc_g} it follows in a similar way 
%$$ g'(t) \geq  v(t) g(t).$$
%De forma análoga obtemos que $g(t) \geq 0$ para todo $t\in[0,\tau)$, ou seja
%\begin{equation}\label{des_g}
$$\Hc(t)\geq -h(t) \ \forall t\in[0,\tau).$$
%\end{equation}
Therefore, %\eqref{des_v} e \eqref{des_g} obtemos
\begin{equation}\label{H_h}
\Hc(t)\geq\modulo{h(t)} \ \forall t\in[0,\tau).
\end{equation}
%de onde segue \eqref{est_laplaciano_est_altura_1_lema}. 
Finally, $\Hc'(t)\geq 0$ is obtained by substituting \eqref{H_h} in \eqref{desig_sem_ricc}. 
\end{proof}

\begin{obs}\label{Remark_parallel}
By choosing the function $h$ appropriately condition \eqref{der_H_3} becomes the strong Serrin condition \eqref{StrongSerrinCondition_mainThm} and condition \eqref{der_H_3_2} turns \eqref{cond_H_Ricci_exist}. 
Roughly speaking, Lemma \ref{lema_Est_altura_cond_ricci} says that the parallel hypersurfaces to $\Gamma$ inherit the initial condition \eqref{der_H_3} throughout the orthogonal geodesics provided condition \eqref{der_H_3_2} also holds. Moreover, if such a function $h$ exists, necessarily the mean curvature of the parallel hypersurfaces lying inside $\W$ increases along the inner normal geodesics. 
This property is true in mean convex domains of manifolds whose Ricci curvature is non-negative but is not necessarily true in the opposite case (see \eqref{nextRemark}).
\end{obs}

%\subsection{A priori height estimate}
%
%We point out that in theorem \ref{T_Serrin_Ricci} of Serrin the combination of condition \eqref{cond_Ricc_Serrin} with the Serrin condition \eqref{SerrinCondition} provides height estimate for the Dirichlet problem \eqref{ProblemaP} in the Euclidean case. Analogously for theorem \ref{T_exist_Spruck} of Spruck. We generalize these geometric ideas in the next theorem. 

%%%%%%%%%%%%%%%%%%%
%%%%%%%%%%%%%%%%%%%%%

\begin{teo}[A priori height estimate]\label{teo_Est_altura}
Let $\W\in M$ be a bounded domain with $\partial\W$ of class $\cl^2$ and $\varphi\in\cl^0(\partial\W)$. Let $H\in\cl^{1}(\overline{\W}\times\R)$ satisfying $\Dz H\geq 0$,
\begin{equation}\label{cond_H_Ricci_sup}
%\Ricc_x\geq n\sup\limits_{z\in\left[-\sup\limits_{\partial\W}\modulo{\varphi},\sup\limits_{\partial\W}\modulo{\varphi}\right]}\norm{\nabla_x H(x,z)}-\dfrac{n^2}{n-1}\inf\limits_{z\in\left[-\sup\limits_{\partial\W}\modulo{\varphi},\sup\limits_{\partial\W}\modulo{\varphi}\right]}\left(H(x,z)\right)^2, \ \forall \ x\in\W,
\Ricc_x\geq n\sup\limits_{z\in\R}\norm{\nabla_x H(x,z)}-\dfrac{n^2}{n-1}\inf\limits_{z\in\R}\left(H(x,z)\right)^2 \ \forall \ x\in\W,
\end{equation}
and
\begin{equation}\label{cond_Serrin_hightest_teo}
%n \sup\limits_{z\in\R}\modulo{H(y,z)} \leq (n-1)\Hc_{\partial\W}(y), \ \forall \ y\in\partial\W.
%n \modulo{H(y,\varphi(y))} \leq (n-1)\Hc_{\partial\W}(y), \ \forall \ y\in\partial\W.
(n-1)\Hc_{\partial\W}(y)\geq n \modulo{H(y,\varphi(y))} \ \forall \ y\in\partial\W.
\end{equation}
If $u\in\cl^2(\W)\cap\cl^0(\overline{\W})$ is a solution of the Dirichlet problem \eqref{ProblemaP}, then 
\begin{equation*}%\label{Est_Altura}
\sup\limits_{\W}\modulo{u}\leq \sup\limits_{\partial\W} \modulo{\varphi} +\dfrac{e^{\mu\delta}-1}{\mu},
\end{equation*}
%onde $\mu>n\sup\limits_{\overline{\W}\times\left[-\sup\limits_{\partial\W}\modulo{\varphi},\sup\limits_{\partial\W}\modulo{\varphi}\right]}\modulo{H(x,z)}$ e $\delta=\diam(\W)$.
where $\mu>n\sup\left\{\modulo{H(x,z)}, (x,z)\in \overline{\W}\times\left[-\sup\limits_{\partial\W}\modulo{\varphi},\sup\limits_{\partial\W}\modulo{\varphi}\right]\right\}$ and $\delta=\diam(\W)$.
\end{teo}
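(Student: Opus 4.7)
The approach is the classical barrier argument using the distance function $d(x):=\dist(x,\partial\W)$ together with the comparison principle for $\Q$. Set $M_0:=\sup_{\partial\W}|\varphi|$, $\delta:=\diam(\W)$, and $\phi(t):=(e^{\mu\delta}-e^{\mu(\delta-t)})/\mu$, so that $\phi(0)=0$, $\phi(\delta)=(e^{\mu\delta}-1)/\mu$, $\phi'(t)=e^{\mu(\delta-t)}\geq 1$ on $[0,\delta]$, and $\phi''=-\mu\phi'$. Define the upper barrier $w^{+}(x):=M_0+\phi(d(x))$ (the lower barrier $w^{-}:=-M_0-\phi(d(x))$ is handled symmetrically); clearly $w^{+}\geq\varphi$ on $\partial\W$ and $w^{+}\leq M_{0}+(e^{\mu\delta}-1)/\mu$ on $\overline\W$. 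Since $\M$ is elliptic and $\Dz H\geq 0$ makes $\Q$ non-increasing in $u$, the comparison principle applies, so it suffices to verify $\Q w^{+}\leq 0$ in $\W$.

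Wherever $d$ is smooth, the identities $|\nabla d|\equiv 1$, $\Hess d(\nabla d,\cdot)\equiv 0$ and $\phi''=-\mu\phi'$ yield
\begin{equation*}
\M w^{+}=\phi'(d)\bigl[-\mu+(1+\phi'(d)^{2})\Delta d\bigr].
\end{equation*}
To bound $\Delta d$, I apply Lemma~\ref{lema_Est_altura_cond_ricci} along each inward normal geodesic $\gamma_y$ from $y\in\partial\W$ with the choice $h(t):=\tfrac{n}{n-1}|H(\gamma_y(t),\varphi(y))|$. Hypothesis \eqref{cond_Serrin_hightest_teo} gives $|h(0)|\leq\Hc_{\partial\W}(y)$; since $|h'(t)|\leq\tfrac{n}{n-1}\sup_z\|\nabla_xH\|$ and $h(t)^{2}\geq\tfrac{n^{2}}{(n-1)^{2}}\inf_zH^{2}$, hypothesis \eqref{cond_H_Ricci_sup} gives \eqref{der_H_3_2}. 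The Lemma therefore yields $(n-1)\Hc_{\Gamma_{d(x)}}(x)\geq n|H(x,\varphi(y))|$, i.e., $\Delta d\leq-n|H(x,\varphi(y))|$, whence
\begin{equation*}
\M w^{+}\leq-\phi'(d)\bigl[\mu+n(1+\phi'(d)^{2})|H(x,\varphi(y))|\bigr].
\end{equation*}

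To conclude $\Q w^{+}\leq 0$, I split cases by the sign of $H(x,w^{+})$. If $H(x,w^{+})\geq 0$, then $nH(x,w^{+})W^{3}\geq 0$ while $\M w^{+}\leq 0$. If $H(x,w^{+})<0$, then $w^{+}\geq M_{0}\geq\varphi(y)$ and $\Dz H\geq 0$ force $H(x,\varphi(y))\leq H(x,w^{+})<0$, so $|H(x,\varphi(y))|\geq|H(x,w^{+})|$, and the needed inequality reduces, via the elementary bound $\sqrt{1+s^{2}}-s\leq 1/(2s)$ applied with $s=\phi'(d)\geq 1$, to $\mu\geq n|H(x,w^{+})|$, which holds because $|H(x,w^{+})|\leq|H(x,\varphi(y))|\leq\sup_{\overline\W\times[-M_{0},M_{0}]}|H|<\mu/n$. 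Non-smoothness of $d$ on the cut locus of $\partial\W$ is handled by the standard device: at any prospective contact point $x_{0}$ of $u-w^{+}$, fix a minimizer $y_{0}\in\partial\W$ of $d(x_{0})$ and replace $d$ locally by $\tilde d(x):=\dist(x,y_{0})$, which is smooth near $x_{0}$ and satisfies $\tilde d\geq d$ with equality at $x_{0}$; the preceding computations go through with $\tilde d$. The principal obstacle is the sign analysis in this last step: specifically, the observation that $\Dz H\geq 0$ combined with $w^{+}\geq M_{0}$ lets $|H(x,w^{+})|$ be controlled by $|H(x,\varphi(y))|$, which is what allows the supremum of $|H|$ in the definition of $\mu$ to be taken only over the restricted slab $\overline\W\times[-M_{0},M_{0}]$, despite $w^{+}$ attaining values beyond this range.
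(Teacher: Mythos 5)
The barrier construction, the bound $\Delta d\leq -n|H(x,\varphi(y))|$ via Lemma~\ref{lema_Est_altura_cond_ricci}, and the sign analysis using $\Dz H\geq 0$ to dominate $H(x,w^\pm)$ by $H(x,\varphi(y))$ are essentially the same as the paper's and are carried out correctly. The gap is in your treatment of the cut locus of $\partial\W$, which is the crux of making the comparison argument rigorous since $d$ is not $\cl^2$ there.

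You propose to replace $d$ near a contact point $x_0$ by $\tilde d(x)=\dist(x,y_0)$, noting $\tilde d\geq d$ with equality at $x_0$, and claim the ``preceding computations go through with $\tilde d$.'' They do not: the computation hinges on the estimate $\Delta d\leq -n|H(x,\varphi(y))|<0$, and this fails badly for $\tilde d$. The Laplacian of the distance from a \emph{point} is bounded \emph{below}, not above, by curvature comparison; in $\R^n$ one has $\Delta\,\dist(\cdot,y_0)=(n-1)/\dist(\cdot,y_0)>0$, and more generally it equals $(n-1)$ times the mean curvature of the geodesic sphere centered at $y_0$, which is positive for small radii. Hence $\Delta\tilde d(x_0)>0>-n|H|$ and you cannot conclude $\Q\tilde w^+\leq 0$. (The inequality $\tilde d\geq d$ is also the wrong-sided one for a viscosity-type argument here: touching $d$ from \emph{above} only extracts information from a differential inequality $d$ satisfies in the subsolution sense, not the supersolution estimate $\Delta d\leq -n|H|$ that you need.)

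The paper avoids this entirely by a different argument. Assuming for contradiction that $u-w$ attains a positive maximum at some interior $x_0$, it first observes $\nabla u(x_0)\neq 0$ (the derivative of $w$ along the minimizing geodesic from $y_0$ to $x_0$ is $\phi'(t_0)>0$), then uses a small geodesic ball $B_\epsilon(z_0)$ internally tangent at $x_0$ to the level set $\{u=u(x_0)\}$ on which $u>u(x_0)$. Comparing $d$ along the geodesic from $z_0$ to $y_0$ forces $z_0$ to lie on the extension of the unique foot-point geodesic $\gamma$ past $x_0$, from which one deduces $x_0\in\W_0$ (the set with the unique nearest-point property), where $d$ is $\cl^2$. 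Only then is the computation $\Q(w+m)\leq\Q u$ performed, with the strong maximum principle finishing the contradiction. You need an argument of this kind (or an alternative such as a careful distributional/measure-theoretic treatment of $\Delta d$ across the cut locus, exploiting that the singular part of the Laplacian of $\dist(\cdot,\partial\W)$ is a nonpositive measure); the substitution $\tilde d=\dist(\cdot,y_0)$ does not work.
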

%%%%%%%%%%
\begin{proof}
%A prova do teorema segue as ideias de Serrin \cite[T. p. 481]{Serrin} no caso Euclidiano mas usando o lema \ref{lema_Est_altura_cond_ricci}.
For $x\in\W$ let us define the distance function $d(x)=\dist(x,\partial\W)$. Let $\W_0$ be the biggest open subset of $\W$ having the unique nearest point property; that is, for every $x\in\W_0$ there exists a unique $y\in\partial\W$ such that $d(x)=\dist(x,y)$. 
Then $d\in\cl^2(\W_0)$ (see \cite{LiNirenberg2}).%(see \cite[Prop. 4.1 p. 794]{spruck}, \cite{LiNirenberg2}). %lembrando de \eqref{operador_W0} temos em $\W_0$

Let 
$$\phi(t)=\dfrac{\ds e^{\mu\delta}}{\mu}\left(1-e^{-\mu t}\right)$$
where $\mu$ and $\delta$ are the constant defined in the statement of the theorem. We now define \linebreak $w=\phi\circ d + \sup\limits_{\partial\W}\modulo{\varphi}$.
%The desired estimate follows if $u\leq w$ in $\overline{\W}$. By contradiction suppose that $u-w$ attains a maximum $m>0$ at $x_0\in{\W}$. That is,
The desired estimate follows if $\modulo{u}\leq w$ in $\overline{\W}$. First, we will prove that $u\leq w$. By contradiction suppose that $u-w$ attains a maximum $m>0$ at $x_0\in{\W}$. That is,
\begin{equation}\label{uleqwm}
u\leq w + m \mbox{ in } \W
\end{equation}
and 
\begin{equation}\label{uxowm}
u(x_0) = w(x_0) + m .
\end{equation}
%Under this condition $u(x)-w(x)\leq u(x_0)-w(x_0)=m$ for each $x\in\W$. 

Choose $y_0\in\partial\W$ such that $d(x_0)=\dist(x_0,y_0)$ and let $\gamma$ be the minimizing geodesic orthogonal to $\partial\W$ joining $x_0$ to $y_0$. Then, $d(x)=\dist(x,y_0)$ for every $x$ lying in $\gamma$ between $x_0$ and $y_0$. By restricting $u$ and $w$ to $\gamma$ we see that $u'(d(x_0))=w'(d(x_0))=\phi'(d(x_0))>0$. Thus, $\nabla u(x_0)\neq 0$ and the level set
$\Gamma_0=\left\{x\in\W;u(x)=u(x_0)\right\}$ is an hypersurface of class $\cl^2$ in a neighbourhood of $x_0$. Consequently, there exists a geodesic ball $B_{\epsilon}(z_0)$ tangent to $\Gamma_0$ in $x_0$ such that 
\begin{equation}\label{eq_bola_1}
u\geq u(x_0) \mbox{ in } \overline{B_{\epsilon}(z_0)}. 
%u > u(x_0) \mbox{ in } \overline{B_{\epsilon}(z_0)}\setminus\{x_0\}. 
\end{equation}

From \eqref{uleqwm}, \eqref{uxowm} and \eqref{eq_bola_1} we obtain for $x\in\overline{B_{\epsilon}(z_0)}$,
$$w(x_0)+m=u(x_0)\leq u(x)\leq w(x)+m. $$
Hence, $w(x_0)\leq w(x)$ which yields 
\begin{equation}\label{eq111}
d(x_0)\leq d(x) \mbox{ in } \overline{B_{\epsilon}(z_0)}
\end{equation} 
since $\phi$ is increasing. 

Therefore, if $\overline{x}$ lies in the intersection of $\partial B_{\epsilon}(z_0)$ with a geodesic minimizing the distance between $z_0$ and $y_0$, then
\begin{equation*}
\begin{split}
\dist(z_0,y_0)=&\dist(z_0,\overline{x})+\dist(\overline{x},y_0)\\
 =&\dist(z_0,{x_0})+\dist(\overline{x},y_0)\\
 \geq & \dist(z_0,{x_0})+d(\overline{x})\\
 \geq &\dist(z_0,{x_0})+d({x_0})\mbox{ from \eqref{eq111}}\\
  =&\dist(z_0,{x_0})+d({x_0},y_0).
\end{split}
\end{equation*}
Thus, equality holds in the triangle inequality which implies that $x_0=\overline{x}$. That is, $z_0=\gamma(d(x_0)+\epsilon)$. This ensures that $x_0\in\W_0$ because if there exists $y_1\neq y_0$ satisfying $d(x_0)=\dist(x_0,y_1)$, then%, using the same argument, $x_0$ must be in the minimizing geodesic joining $z_0$ 
$$ \dist(z_0,y_1)<\dist(z_0,x_0)+\dist(x_0,y_1)=\dist(z_0,x_0)+\dist(x_0,y_0)=d(z_0),$$ 
which contradicts the definition of $d$.

However, $x_0$ can not be in $\W_0$ as will be shown in the sequel. Some algebraic computations yields
\begin{equation}\label{Mw_est_altura_0}
%\M w =  \dfrac{\phi'}{(1+\phi'^2)^{1/2}} \Delta d + \dfrac{\phi''}{\left(1+\phi'^2\right)^{3/2}} \ \mbox{ em } \W_0.
\M w =  \phi'(1+\phi'^2) \Delta d + {\phi''} \ \mbox{ in } \W_0.
\end{equation}
For $x\in\W_0$, let $y=y(x)$ in $\partial\W$ be the nearest point to $x$ and $\gamma_y(t)$ the orthogonal geodesic to $\partial\W$ from $y$ to $x$. Let us define 
$$h(t)=\frac{n}{n-1}H\left(\gamma_y(t),\varphi(y)\right).$$
Note that $y$ is now fixed. 
On account of the Serrin condition \eqref{cond_Serrin_hightest_teo} one has
$$\modulo{h(0)} = \dfrac{n}{n-1} \modulo{H\left(y,\varphi(y)\right)}\leq \Hc_{\partial\W}(y) =\Hc(0) .$$
Besides, 
$$ h'(t)=\dfrac{n}{n-1}\escalar{\nabla_x H(\gamma_y(t),\varphi(y))}{\gamma_y'(t)}. $$
The additional hypothesis \eqref{cond_H_Ricci_sup} yields
$$(n-1)\left(\modulo{h'(t)}-(h(t))^2\right)\leq \Ricc_{\gamma_y(t)}(\gamma_y'(t)).$$
Thus, Lemma \ref{lema_Est_altura_cond_ricci} can be applied to the function $h(t)$ to obtain 
$$ n \modulo{H(\gamma_y(t),\varphi(y))}\leq  (n-1)\Hc_{\Gamma_t}(\gamma_y(t)),$$
where $\Gamma_t$ is parallel to some portion of $\partial\W$.  
%Consequently, 
By using a well-known formula linking the Laplacian of the distance function with the mean curvature of parallel hypersurfaces, we get
$$\Delta d(x)\leq-n\modulo{H\left(x,\varphi(y(x))\right)} \ \forall \ x\in\W_0.$$
%where $y(x)$ is é o ponto em $\partial\W$ mais perto de $x$.
%$$\Delta d(x)\leq -n \modulo{H\left(x,z_0\right)}.$$
Using this estimate in \eqref{Mw_est_altura_0} it follows
%$$\M w \leq  -n\modulo{H\left(x,\varphi(y(x))\right)} \dfrac{\phi'}{(1+\phi'^2)^{1/2}}  + \dfrac{\phi''}{\left(1+\phi'^2\right)^{3/2}}.$$
$$\M w \leq  -n\modulo{H\left(x,\varphi(y(x))\right)} {\phi'}{(1+\phi'^2)}  + {\phi''}.$$
Also, 
$$%\begin{equation}\label{phi2}
\phi''(t)=-\mu \ds e^{\mu(\delta-t)}=-\mu\phi'(t)<-n\modulo{H(x,\varphi(y(x)))}\phi'(t) 
$$%\end{equation}
and $\phi'\geq 1$, so
%\begin{align*}
%\M w \leq& -n\modulo{H\left(x,\varphi(y(x))\right)} \dfrac{\phi'}{(1+\phi'^2)^{1/2}} -n\modulo{H\left(x,\varphi(y(x))\right)}\dfrac{\phi'}{\left(1+\phi'^2\right)^{3/2}}\\
%= & -n\modulo{H\left(x,\varphi(y(x))\right)} \dfrac{\phi'(2+\phi'^2)}{\left(1+\phi'^2\right)^{3/2}}.
%\end{align*}
%Do fato de termos $a(2+a^2)\geq (1+a^2)^{3/2}$ sempre que $a\geq\sqrt{\dfrac{1}{2}\left(\sqrt{5}-1\right)}$, resulta 
\begin{equation}\label{est_Mw_est_alt}
%\M w \leq -n\modulo{H\left(x,\varphi(y(x))\right)} \dfrac{\phi'(2+\phi'^2)}{\left(1+\phi'^2\right)^{3/2}} < -n\modulo{H\left(x,\varphi(y(x))\right)}.
\M w %\leq -n\modulo{H\left(x,\varphi(y(x))\right)} {\phi'(2+\phi'^2)}
< -n\modulo{H\left(x,\varphi(y(x))\right)}{\left(1+\phi'^2\right)^{3/2}}.
\end{equation}
On the other hand, the hypothesis $\Dz H\geq 0$ implies that 
\begin{equation}\label{eq_Hxw}
 \mp H(x,\pm w)
%=\mp H\left(x,\pm \phi (d(x))\pm \ds\sup_{\partial\W} \modulo{\varphi}\right)
\leq \mp H\left(x,{\varphi(y(x))}\right)\leq \modulo{H\left(x,{\varphi(y(x))}\right)}.
\end{equation}
From \eqref{est_Mw_est_alt} and \eqref{eq_Hxw} we conclude that 
\begin{align*} %{\left(1+\phi'^2\right)^{3/2}}
\pm \Q(\pm w) = & \M w \mp nH\left(x,\pm w\right) {\left(1+\phi'^2\right)^{3/2}}\leq 0.
%<& \left(-n\modulo{H\left(x,\varphi(y(x))\right)} \mp n H\left(x,\pm {\varphi(y(x))}\right)\right){\left(1+\phi'^2\right)^{3/2}} \leq 0.
\end{align*}
Therefore,
\begin{align*}
\Q(w+m)=&\M (w+m)-nH(x,w+m){\left(1+\phi'^2\right)^{3/2}}
    \leq %&\M w - nH(x,w){\left(1+\phi'^2\right)^{3/2}}= 
		\Q w \leq \Q u.
\end{align*}
Moreover, $u \leq w + m$ and $u(x_0)=w(x_0)+m$. The maximum principle implies that $u = w+m$ in $\W_0$ which contradicts the fact that $u<w+m$ in $\partial\W$. This proves that $u\leq w$ in $\overline{\W}$. 

In a similar way it can be proved that $u \geq - w$ in $\overline{\W}$.
\end{proof}
\begin{obs} Instead of condition \eqref{cond_H_Ricci_sup}, the proof shows that it suffice to assume 
%$H\in\cl^1\left(\W\times[-M_0,M_0]\right)$ for some $M_0\geq \sup\limits_{\partial\W} \modulo{\varphi} +\dfrac{e^{\mu\delta}-1}{\mu}$, and the condition 
\begin{equation*}
\Ricc_{x}\geq n\norm{\nabla_x H(x,\varphi(y))}-\dfrac{n^2}{n-1}\left(H(x,\varphi(y))\right)^2 \ \forall \ x\in\W_0,
\end{equation*}
where %$\W_0$ is the biggest open subset of $\W$ having the unique nearest point property, and 
$y\in\partial\W$ is the nearest point to $x$.
\end{obs}

%\subsection{A priori boundary gradient estimates}\label{sec_est_grad_bordo}
%In this section we use the classical idea to find upper and a lower barriers for $u$ on $\partial\W$ to get a control for $\nabla u$ along $\partial\W$. 

\begin{teo}[Boundary gradient estimate]\label{teo_Est_gradiente_fronteira}
Let $\W\in M$ be a bounded domain with $\partial\W$ of class $\cl^2$ and $\varphi\in\cl^2(\overline{\W})$. 
Let 
%$H\in\cl^{1}\left(\overline{\W}\times\left[-\sup\limits_{\overline{\W}}\modulo{u},\sup\limits_{\overline{\W}}\modulo{u}\right]\right)$ 
$H\in\cl^{1}\left(\overline{\W}\times\R\right)$ satisfying $\Dz H\geq 0$,
\begin{equation}\label{cond_H_Ricci_sup_est_grad_fronteira}
\Ricc_x\geq n\sup\limits_{z\in\R}\norm{\nabla_x H(x,z)}-\dfrac{n^2}{n-1}\inf\limits_{z\in\R}\left(H(x,z)\right)^2 \ \forall \ x\in\W,
\end{equation} 
and
\begin{equation}\label{cond_Serrin}
(n-1)\Hc_{\partial\W}(y)\geq n \modulo{H(y,\varphi(y))} \ \forall \ y\in\partial\W.
\end{equation}
%Então existe $\varepsilon>0$, $\varepsilon=\varepsilon(\norm{u}_0,\norm{H}_1,\norm{\varphi}_2,n,\W)$ tal que se 
If $u\in\cl^2(\W)\cap\cl^1(\overline{\W})$ solves problem \eqref{ProblemaP}, then
\begin{equation}\label{EstGradFront}
\sup\limits_{\partial\W}\norm{\nabla u}\leq \norm{\varphi}_1 + \ds e^{C\left(1+ \norm{H}_1+ \norm{\varphi}_2\right)\left(1+\norm{\varphi}_1\right)^3\left(\norm{u}_0+\norm{\varphi}_0\right)}
\end{equation}
for some $C=C(n,\W)$.
\end{teo}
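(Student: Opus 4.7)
The plan is to run the classical barrier argument of Serrin–Spruck, adapted to the Riemannian setting via Lemma~\ref{lema_Est_altura_cond_ricci}. For each boundary point $y_0 \in \partial\W$ I would construct upper and lower barriers of the form
$$
w^{\pm}(x) = \varphi(x) \pm \psi\bigl(d(x)\bigr), \qquad \psi(t) = \frac{1}{\nu} \log(1 + \kappa t),
$$
on the tubular neighborhood $\W^a := \{x \in \W : d(x) < a\}$, where $d(x) = \dist(x, \partial\W)$. The parameter $a$ is chosen small enough that $d$ is of class $\cl^2$ in $\W^a$ and each $x \in \W^a$ has a unique nearest point $y(x) \in \partial\W$; in particular $\W^a \subset \W_0$ in the notation of Theorem~\ref{teo_Est_altura}. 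The parameters $\nu$ and $\kappa$ will be chosen later in terms of the data of the problem.

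Once the barriers are set up, the argument mirrors the one for the height estimate: show that $\Q w^+ \leq 0 \leq \Q u$ in $\W^a$, that $w^+ \geq u$ on the inner cap $\{d = a\}$ (by choosing $\kappa$ so that $\psi(a) \geq \norm{u}_0 + \norm{\varphi}_0$), and that $w^+ = \varphi = u$ on $\partial\W$. The maximum principle then gives $u \leq w^+$ in $\W^a$, and since $u(y_0) = w^+(y_0)$, a comparison of inward normal derivatives at $y_0$ yields
$$
\partial_\nu u(y_0) \leq \partial_\nu \varphi(y_0) + \psi'(0) = \partial_\nu \varphi(y_0) + \frac{\kappa}{\nu}.
$$
The lower barrier $w^-$ produces the opposite inequality, and the tangential part of $\nabla u$ along $\partial\W$ is controlled by $\nabla \varphi$ since $u = \varphi$ on $\partial\W$. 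Together these give $\sup_{\partial\W} \norm{\nabla u} \leq \norm{\varphi}_1 + \kappa/\nu$.

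The heart of the argument is verifying $\Q w^+ \leq 0$ in $\W^a$. Expanding $\nabla w^+$ and $\Hess w^+$, using $|\nabla d| \equiv 1$ and $\Hess d(\nabla d, \cdot) = 0$, and exploiting that the matrix $W^2 \sigma^{ij} - (w^+)^i (w^+)^j$ is essentially scalar $W^2$ on $(\nabla d)^\perp$ and of order $1$ along $\nabla d$, one finds that the leading contribution as $\kappa$ grows takes the form of $W^2$ times $\psi'(d) \, \Delta d$ plus contributions from $\Delta \varphi$ and $\Hess \varphi$. At this stage the hypotheses \eqref{cond_H_Ricci_sup_est_grad_fronteira} and \eqref{cond_Serrin} are applied exactly as in the proof of Theorem~\ref{teo_Est_altura}: Lemma~\ref{lema_Est_altura_cond_ricci} applied to $h(t) = \frac{n}{n-1} H(\gamma_{y(x)}(t), \varphi(y(x)))$ gives
$$
\Delta d(x) \leq -n\, \modulo{H\bigl(x, \varphi(y(x))\bigr)}, \qquad x \in \W^a.
$$
Coupled with the monotonicity $\Dz H \geq 0$ (which gives $\mp H(x, w^\pm) \leq \modulo{H(x, \varphi(y(x)))}$), the term $\psi'(d)\,\Delta d$ absorbs the inhomogeneity $n H(x, w^+) W^3$ as soon as $\nu$ is taken of order $1 + \norm{H}_1 + \norm{\varphi}_2$ multiplied by the appropriate power of $(1 + \norm{\varphi}_1)$ coming from the $W^3$ factor.

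The final step is bookkeeping. Requiring $\psi(a) = \nu^{-1} \log(1 + \kappa a) \geq \norm{u}_0 + \norm{\varphi}_0$ forces
$$
\kappa = \frac{e^{\nu(\norm{u}_0 + \norm{\varphi}_0)} - 1}{a},
$$
so $\psi'(0) = \kappa/\nu$ produces precisely the exponential form \eqref{EstGradFront} once $\nu$ is fixed as $\nu = C(n, \W)(1 + \norm{H}_1 + \norm{\varphi}_2)(1 + \norm{\varphi}_1)^3$, with the geometric constant $C(n, \W)$ encoding the $\cl^2$ bounds on $d$ and on the second fundamental form of $\partial\W$. The principal obstacle is the pointwise verification of $\Q w^+ \leq 0$: the anisotropy of the matrix $W^2 \sigma^{ij} - (w^+)^i (w^+)^j$ makes the computation delicate, and one must carefully identify which terms are of order $\psi'^3$, $\psi'^2$, $\psi''$, and lower, so as to pin down the polynomial exponents $(1 + \norm{\varphi}_1)^3$ and $(1 + \norm{H}_1 + \norm{\varphi}_2)$ appearing in the final estimate.
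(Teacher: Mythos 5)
Your proposal is correct in outline and takes essentially the same route as the paper: barriers $w^\pm=\varphi\pm\psi(d)$ with $\psi(t)=\nu^{-1}\log(1+\kappa t)$, Lemma~\ref{lema_Est_altura_cond_ricci} to control $\Delta d$ in the tubular neighborhood, and the same parameter choices for $\nu$ and $\kappa$ leading to the stated exponential bound. One detail in your sketch is not right as written: $\Dz H\ge 0$ yields $\mp H(x,w^\pm(x))\le \mp H(x,\varphi(x))\le\modulo{H(x,\varphi(x))}$, with $\varphi$ evaluated at $x$, \emph{not} at the foot point $y(x)$; your parenthetical claim that monotonicity alone gives $\mp H(x,w^\pm)\le\modulo{H(x,\varphi(y(x)))}$ fails when $\varphi(x)<\varphi(y(x))$. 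The resulting mismatch (between $\varphi(x)$ and $\varphi(y(x))$, or --- as the paper arranges it --- between $x$ and $y(x)$ inside $H$) is of size $O(d(x))$ by the $\cl^1$ bounds on $H$ and $\varphi$; after multiplication by $\psi'W_\pm^2$ (of order $\psi'^3$) it produces a $d\,\psi'^3$ term which the structural property $t\psi'(t)\le 1$ of the logarithmic $\psi$ reduces to $\psi'^2$. This is exactly the step recorded as inequality \eqref{Termo2} in the paper, and it accounts for part of the factor $(1+\norm{\varphi}_1)^3$ in $\nu$; your bookkeeping should include it explicitly.
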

\begin{proof}
Again we set $d(x)=\dist(x,\partial\W)$ for $x\in\W$. Let $\tau>0$ be such that $d$ is of class $\cl^2$ over the set of points in $\W$ for which $d(x)\leq\tau$. % (veja corolário \ref{viz_tub_dom_limitado}).
Let $\psi\in\cl^2([0,\tau])$ be a non-negative function satisfying 
\begin{multicols}{3}
\begin{enumerate}
\item[P1.] $\psi'(t)\geq 1$,
\item[P2.] $\psi''(t) \leq 0$,
\item[P3.] $t\psi'(t)\leq 1$.
%\item[(P4).] $\varepsilon\psi'(t) < 1$.
\end{enumerate}
\end{multicols}

For $a<\tau$ to be fixed latter on we consider the set 
$$\W_{a}=\left\{x\in M; d(x)<a \right\} .$$

We now define $w^{\pm}=\pm \psi\circ d +  \varphi$. Firstly, let us estimate $\pm\M w^{\pm}$ in $\W_a$. A straightforward computation yields 
%\begin{equation}\label{eq_barreira_superior}
%%\begin{array}{rl}
%\begin{split}
%%\pm \M w^{\pm}=&\psi'W^2\Delta d  + \psi''W^2-\psi''\escalar{\nabla   d }{\psi' \nabla d  + \nabla \varphi}^2\\
%\pm \M w^{\pm}=&\psi'W_{\pm}^2\Delta d -\psi'\Hess d (  \nabla \varphi, \nabla \varphi) \\ 
%& + \psi''W_{\pm}^2-\psi''\escalar{\nabla   d }{\pm \psi' \nabla d  +   \nabla \varphi}^2\\
%&\pm W_{\pm}^2\Delta \varphi \mp   \Hess\varphi(\pm\psi' \nabla d  +  \nabla \varphi,\pm\psi' \nabla d  +  \nabla \varphi),
%%\end{array}
%\end{split}
%\end{equation}
\begin{equation}\label{eq_barreira_superior}
%\begin{array}{rl}
\begin{split}
%\pm \M w^{\pm}=&\psi'W^2\Delta d  + \psi''W^2-\psi''\escalar{\nabla   d }{\psi' \nabla d  + \nabla \varphi}^2\\
\pm \M w^{\pm}=\psi'W_{\pm}^2\Delta d -\psi'\Hess d (  \nabla \varphi, \nabla \varphi) + \psi''W_{\pm}^2-\psi''\escalar{\nabla   d }{\pm \psi' \nabla d  +   \nabla \varphi}^2\\
\pm W_{\pm}^2\Delta \varphi \mp   \Hess\varphi(\pm\psi' \nabla d  +  \nabla \varphi,\pm\psi' \nabla d  +  \nabla \varphi),
%\end{array}
\end{split}
\end{equation}
where 
$$%\begin{equation}\label{Wpm}
W_{\pm}=\sqrt{1+\norm{\nabla w^{\pm}}^2}=\sqrt{1+\norm{\pm \psi'\nabla  d  +   \nabla \varphi}^2}.
$$%\end{equation}

Once $\Hess d(x)$ is a continuous bilinear form and $\psi'\geq 1$ we have % usando o fato de $\psi'\geq 1$, e que 
%\begin{equation*}%\label{BBB}
%\modulo{\Hess d ( \nabla \varphi, \nabla \varphi)}\leq \norm{d}_2\norm{\varphi}_1^2.
%\end{equation*}
%Já que $\psi'\geq 1$ segue
\begin{equation}\label{BBB}
\psi'\modulo{\Hess d ( \nabla \varphi, \nabla \varphi)} \leq \psi'^2 \norm{d}_2\norm{\varphi}_1^2.% \leq \norm{d}_2\norm{\varphi}_1^2.
\end{equation}

Since $\psi''<0$ and $\escalar{\nabla   d}{\pm\psi' \nabla d  +  \nabla \varphi}^2\leq \norm{\pm\psi' \nabla d + \nabla \varphi}^2$, then 
%\begin{equation}\label{AAA}
\begin{equation}\label{AAA}
\psi''W_{\pm}^2-\psi''\escalar{\nabla   d }{\pm\psi' \nabla d  +  \nabla \varphi}^2\leq \psi''.
\end{equation}

%%%%%%%%%%%%%%%%%%%%%%%%%%%%%%%%%
Also, $\varphi$ is of class $\cl^2$ in $\W_a$ by hypothesis, so
\begin{align*}
\modulo{\pm W_{\pm}^2\Delta \varphi \mp   \Hess \varphi(\pm\psi' \nabla d  +  \nabla \varphi,\pm\psi' \nabla d  +  \nabla \varphi)} 
%\\ 
%& \leq n\norm{\varphi}_2 W_{\pm}^2+ \norm{\varphi}_2\norm{\pm\psi' \nabla d  +  \nabla \varphi}^2\\
& \leq 2 n \norm{\varphi}_2 W_{\pm}^2.
\end{align*}

Note also that
$$%\begin{equation}\label{normaa}
\norm{\pm\psi' \nabla d  +  \nabla \varphi}^2=\left(\psi'^2+2 \psi'\escalar{\pm\nabla d}{\nabla \varphi}+\norm{\nabla \varphi}^2\right)
\leq \left(1+\norm{\varphi}_1\right)^2\psi'^2,
$$%\end{equation}
hence
%It follows directly from \eqref{normaa} that
\begin{equation}\label{est_W2}
W_{\pm}^2 \leq  1 + \left(1+\norm{\varphi}_1\right)^2\psi'^2 \leq 2\left(1+\norm{\varphi}_1\right)^2\psi'^2.
\end{equation}
Therefore, 
\begin{equation}\label{CCC}
%\modulo{\Hess \varphi(\pm\psi' \nabla d  +  \nabla \varphi,\pm\psi' \nabla d  +  \nabla \varphi)}\leq \norm{\varphi}_2\left(1+\norm{\varphi}_1\right)^2\psi'^2.
\modulo{\pm W_{\pm}^2\Delta \varphi \mp   \Hess \varphi(\pm\psi' \nabla d  +  \nabla \varphi,\pm\psi' \nabla d  +  \nabla \varphi)}
\leq 4n\norm{\varphi}_2\left(1+\norm{\varphi}_1\right)^2\psi'^2.
\end{equation}

%%%%%%%%%%%%%%%%%%%%%%%%%%%%%%%%%%%
Substituting \eqref{AAA}, \eqref{BBB} and \eqref{CCC} in \eqref{eq_barreira_superior} it follows
\begin{equation}\label{est_Mwpm}
%\pm\M w^{\pm}\leq \psi'\left({1+\norm{\pm \psi'\nabla  d  +   \nabla \varphi}^2}\right)\Delta d + \psi''+ c \psi'^2,
\pm\M w^{\pm}\leq \psi' W_{\pm}^2 \Delta d + \psi''+ c \psi'^2,
\end{equation}
where
\begin{equation}\label{constantec0}
c=\norm{d}_2\norm{\varphi}_1^2+4n\norm{\varphi}_2\left(1+\norm{\varphi}_1\right)^2.
\end{equation}

%%%%%%%%%%%%%%%%%%%%%%%%%%%%%%%%%%%%%%

On the other hand,
$$ \mp H(x,w^{\pm}(x)) = \mp H(x,\pm\psi(d(x))+ \varphi(x))\leq \mp H(x,\varphi(x))\leq \modulo{H(x, \varphi(x))}$$
since we are assuming that $\Dz H\geq 0$. Thus,
$$\pm \Q_{ } w^{\pm} = \pm \M w^{\pm} \mp n  H(x,w^{\pm})W_{\pm}^3 \leq  \pm \M w^{\pm} + n  \modulo{H(x, \varphi(x))}W_{\pm}^3.$$
Using the estimate \eqref{est_Mwpm} we obtain
\begin{equation}\label{eq_barreira_superior_Q_2}
\begin{array}{r}
\pm \Q_{ } w^{\pm} \leq  \psi'W_{\pm}^2\Delta d + \psi''+ c \psi'^2 + n  \modulo{H(x, \varphi(x))}W_{\pm}^{3}.
\end{array}
\end{equation}

We now want to estimate $\Delta d$. Let $y\in\partial\W$ be fixed and $\gamma_y(t)=\exp_{y}(tN_y)$ for $0\leq t \leq a$, where $N$ is the inner normal field to $\partial\W$. Again, the hypothesis \eqref{cond_H_Ricci_sup_est_grad_fronteira} and \eqref{cond_Serrin} guaranty that the function $h(t)=\frac{n}{n-1} H(\gamma_y(t),\varphi(y))$ satisfies the hypothesis of Lemma \ref{lema_Est_altura_cond_ricci}. Hence, if $\Gamma_t$ is parallel to $\partial\W$, then 
%$\frac{d}{dt}\Hc_{\Gamma_t}(\gamma_y(t))\geq 0 \mbox{ for } 0\leq t \leq \tau. $
%Accordingly, 
\begin{equation}\label{paraObs}
\Hc_{\Gamma_t}(\gamma_y(t))\geq \Hc_{\partial\W}(y)\mbox{ in } [0,\tau].
\end{equation}
Thus,
\begin{equation}\label{est_usar_hiperb_0}
%\Delta d(x) \leq \Delta d(y) \leq -n \modulo{H(y,\varphi(y))} \ \forall \ x\in\W_a,
\Delta d(x) = -(n-1)\Hc_{\Gamma_{d(x)}}(x) \leq -(n-1) \Hc_{\partial\W}(y) \ \forall \ x\in\W_a,
\end{equation}
where $y=y(x)\in\partial\W$ is the nearest point to $x$. Using again the Serrin condition \eqref{cond_Serrin} it follows
\begin{equation}\label{est_usar_hiperb}
%\Delta d(x) \leq \Delta d(y) \leq -n \modulo{H(y,\varphi(y))} \ \forall \ x\in\W_a,
\Delta d(x) \leq -n \modulo{H(y,\varphi(y))} \ \forall \ x\in\W_a.
\end{equation}

Substituting \eqref{est_usar_hiperb} in \eqref{eq_barreira_superior_Q_2} we obtain 
%{%
%\begin{align*}
%\pm \Q_{ } w^{\pm} 
%\leq & \psi'W_{\pm}^2(-n \modulo{H(y,\varphi(y))}) + \psi''+ c \psi'^2 + n  \modulo{H(x, \varphi(x))}W_{\pm}^{3}\\
%= & n  \psi'W_{\pm}^2(\modulo{H(x,\varphi(x))} -\modulo{H(y,\varphi(y))} -\modulo{H(x,\varphi(x))})  \\
%&+ \psi''+ c \psi'^2 + n \modulo{H(x, \varphi(x))}W_{\pm}^{3}
%%= & n  \psi'W_{\pm}^2( \modulo{H(x,\varphi(x))} -\mod {H(y,\varphi(y))}) + \varepsilon \psi'W_{\pm}^2 + \psi''+ c \psi'^2 \\
  %%&+n \modulo{H(x, \varphi(x))} W_{\pm}^2\left(W_{\pm}-\psi'\right).
%\end{align*}
%donde}
\begin{equation}\label{Qw_medio}
\begin{split}%{array}{r}
\pm \Q_{ } w^{\pm} \leq &   n \psi'W_{\pm}^2( \modulo{H(x,\varphi(x))} -\modulo{H(y,\varphi(y))})   +n \modulo{H(x, \varphi(x))}W_{\pm}^2 \left(W_{\pm}-\psi'\right) + \psi''+ c \psi'^2   .
\end{split}%{array}
\end{equation}

In addition
\begin{equation}\label{FFF}
\modulo{H(x,\varphi(x))}-\modulo{H(y,\varphi(y))}\leq h_1(1+\norm\varphi_1)d(x),
\end{equation}
where
$$h_1=\sup\limits_{\W\times\left[-\sup\limits_{\W}\modulo{\varphi},\sup\limits_{\W}\modulo{\varphi}\right]}\norm{\nabla_{M\times\R} H(x,z)}.$$
From \eqref{est_W2} and \eqref{FFF} one has
%\begin{equation}\label{para_hip_kappa}
\[n  \psi'W_{\pm}^2( \modulo{H(x,\varphi(x))} -\modulo{H(y,\varphi(y))} )\leq  2nh_1\left(1+\norm{\varphi}_1\right)^3 d(x)(\psi'(d(x)))^3.\]
%\end{equation}
Using the assumption P3 it follows
\begin{equation}\label{Termo2}
\begin{array}{c}
n  \psi'W_{\pm}^2( \modulo{H(x,\varphi(x))} -\modulo{H(y,\varphi(y))} )\leq 2 n h_1 \left(1+\norm{\varphi}_1\right)^3 \psi'^2.
\end{array}
\end{equation}

On the other hand, 
\begin{equation}\label{Wmenospsi_0}
W_{\pm}-\psi'\leq 1+\norm{\pm \psi'\nabla d +\nabla \varphi} -\psi' \leq 1+\norm{\varphi}_1.
\end{equation}
From \eqref{est_W2} and \eqref{Wmenospsi_0} we obtain 
\begin{equation}\label{Wmenospsi}
n \modulo{H(x, \varphi(x))} \left(W_{\pm}-\psi'\right)W_{\pm}^2\leq 2 n h_0\left(1+\norm{\varphi}_1\right)^3\psi'^2,
\end{equation}
where
$$h_0=\sup\limits_{\W\times\left[-\sup\limits_{\W}\modulo{\varphi},\sup\limits_{\W}\modulo{\varphi}\right]}\modulo{H(x,z)}.$$

Substituting \eqref{Termo2} and \eqref{Wmenospsi} in \eqref{Qw_medio} we get 
$$ \pm \Q_{ } w^{\pm} \leq \left(c+2n\norm{H}_{1}\left(1+\norm{\varphi}_1\right)^3\right)\psi'^2+\psi'',$$
where $\norm{H}_1=h_0+h_1$.

Remembering the expression for $c$ given in \eqref{constantec0} and making some algebraic computation we infer that 
$$ \pm \Q_{ } w^{\pm} < \nu\psi'^2+\psi'',$$
where
%\begin{align*}
%c+2n\norm{H}_{1}\left(1+\norm{\varphi}_1\right)^3 < C \left(1+\norm{\varphi}_2 + \norm{H}_{1}\right)\left(1+\norm{\varphi}_1\right)^3,
%\end{align*}
%where 
%\begin{equation}\label{C_kappa}
%C= 4n\left(1+\norm{d}_2+1/\tau\right).
%\end{equation}
%Choosing 
\begin{equation}\label{nu}
\nu= 4n\left(1+\norm{d}_2+1/\tau\right) \left(1+ \norm{H}_1+ \norm{\varphi}_2\right)\left(1+\norm{\varphi}_1\right)^3.
\end{equation}
Defining $\psi$ explicity by 
$$%\begin{equation}\label{psi_est_grad_bordo}
\psi(t)=\dfrac{1}{\nu}\log(1+kt)
$$%\end{equation}
we obtain $\nu\psi'^2+\psi''=0$. Indeed, 
\begin{equation}\label{dpsi}
\psi'(t)=\dfrac{k}{\nu(1+kt)}
\end{equation}
and 
\begin{equation}\label{ddpsi}
\psi''(t)=-\dfrac{k^2}{\nu(1+kt)^2}. 
\end{equation}
Therefore,
$$%\begin{equation}\label{est_Qw_2}
\pm \Q w^{\pm} < 0 \ \mbox{ in } \ \W_a.
$$%\end{equation}

Note that property P2 follows from \eqref{ddpsi}. Another consequence of \eqref{ddpsi} is that $\psi'(t)>\psi'(a)$ for all $t\in[0,a]$, thus property P1 is ensured provided 
\begin{equation}\label{paraP1}
\psi'(a)=\dfrac{k}{\nu(1+ka)}=1.
\end{equation}
Also,
%$$ \dfrac{1+kt}{k}\psi'=\dfrac{1+kt}{k}\dfrac{k}{\nu(1+kt)}=\dfrac{1}{\nu}<1.$$
$$ t\psi'(t)=\dfrac{kt}{\nu(1+kt)}\leq \dfrac{1}{\nu}<1$$
which is property P3. Hence, $\psi$ thus defined satisfies all the initial requirements. 

Furthermore, choosing 
\begin{equation}\label{esc_psia_curv_media}
\psi(a) = \dfrac{1}{\nu}\log(1+ka) =  \norm{u}_0+\norm{\varphi}_0
\end{equation}
it follows %for each $x\in \partial\W_a\setminus\partial\W$ that
$$\pm w^{\pm}(x)=\psi(a)\pm \varphi(x)=\norm{u}_0+\norm{\varphi}_0 \pm \varphi(x) \geq \pm u(x) \  \forall \ x\in \partial\W_a\setminus\partial\W.$$ 
Besides, for $x\in\partial\W$ one has $w^{\pm}(x)=\pm \psi(0)+\varphi(x)=u(x)$. By the maximum principle it can be conclude that $w^-\leq u \leq w^+ $ in $\W_a$. Thus, 
$$ -\psi\circ d  \leq u - \varphi \leq \psi\circ d \mbox{ in } \W_a, $$
being that  
$$ -\psi\circ d  = u - \varphi = \psi\circ d =0 \mbox{ in } \partial\W.$$
Consequently, if $y\in\partial\W$ and $0 \leq t \leq a$, then
$$-\psi(t) + \psi(0) \leq (u-\varphi) (\gamma_y(t)) - (u-\varphi)(\gamma_y(0)) \leq \psi(t)-\psi(0).$$
Dividing by $t>0$ and passing to the limit as $t$ goes to zero we infer that
\begin{equation}\label{est_grad_fronteira_1}
\modulo{\escalar{\nabla u(y)}{N}}\leq \modulo{\escalar{\nabla \varphi(y)}{N}} + \psi'(0).
\end{equation}
Since $u=\varphi$ on $\partial\W$, we derive from \eqref{est_grad_fronteira_1}
\begin{align*}
\norm{\nabla u(y)}\leq&\norm{\nabla \varphi(y)} + \psi'(0).
\end{align*}

The desired estimate \eqref{EstGradFront} follows from this last expression because the combination of \eqref{paraP1} and \eqref{esc_psia_curv_media} yields
\begin{equation}\label{cte_k}
k=\nu\ds e^{\nu(\norm{u}_0+\norm{\varphi}_0)}.
\end{equation}
Observe also that from \eqref{nu}, \eqref{paraP1} and \eqref{cte_k} it follows
$$%\begin{equation}\label{esc_a_curv_media_0}
a%=\dfrac{e^{\nu(\norm{u}_0+\norm{\varphi}_0)}-1}{k}
=\dfrac{e^{\nu(\norm{u}_0+\norm{\varphi}_0)}-1}{\nu\ds e^{\nu(\norm{u}_0+\norm{\varphi}_0)}}<\frac{1}{\nu}<\tau
$$%\end{equation}
as required at the beginning. 
\end{proof}

%\begin{obs}
%It is suffice to assume in the statement of theorem \ref{teo_Est_gradiente_fronteira} that
%%$H\in\cl^1\left(\W\times[-M_0,M_0]\right)$ for some $M_0\geq \sup\limits_{\partial\W} \modulo{\varphi} +\dfrac{e^{\mu\delta}-1}{\mu}$, and the condition 
%\begin{equation*}
%\Ricc_{x}\geq n\norm{\nabla_x H(x,\varphi(y))}-\dfrac{n^2}{n-1}\left(H(x,\varphi(y))\right)^2 \ \forall \ x\in\W_0,
%\end{equation*}
%where $\W_0$ is the biggest open subset of $\W$ having the unique nearest point property, and $y\in\partial\W$ is the nearest point to $x$.
%\end{obs}

\bigskip 

%\begin{obs}\label{obs_est_grad_bordo}
%Recall now that assumption \eqref{cond_H_Ricci_sup_est_grad_fronteira} was needed because its combination with \eqref{cond_Serrin} ensures that the geometric property \eqref{paraObs} holds (that is, the mean curvature of the parallel hypersurfaces ${\Gamma_t}$ contained in $\W$ increases along the inner normal geodesics). In order to see that this property does not always happens in mean convex domains of manifolds whose Ricci curvature is non-positive let us consider a mean convex domain $\W$ in the hyperbolic space $\HH^n$. For $y\in\partial\W$ let $\lambda_i(t)$ be the ith principal curvature of $\Gamma_t$ at $\gamma_y(t)$, then (see \cite[p. 17]{marcos})
Recall now that assumption \eqref{cond_H_Ricci_sup_est_grad_fronteira} was also requested in the statement of Theorem \ref{teo_Est_gradiente_fronteira} because its combination with \eqref{cond_Serrin} ensures, in addition, the geometric property \eqref{paraObs} (see Remark \ref{Remark_parallel}). In order to see that this property does not always happens in mean convex domains of manifolds whose Ricci curvature is non-positive let us consider a mean convex domain $\W$ in the hyperbolic space $\HH^n$. For $y\in\partial\W$ let $\lambda_i(t)$ be the ith principal curvature of $\Gamma_t$ at $\gamma_y(t)$, then (see \cite[p. 17]{marcos})
\begin{equation}\label{curv_princ_paral_explicito}
\lambda_i(t)=\dfrac{-\tanh t+\lambda_i(0)}{1- \lambda_i(0)\tanh t }, 
\end{equation}
hence
\begin{equation}\label{der_curv_princ_hiperb} 
\lambda_i'(t)=\dfrac{\sech^2(t)\left(\left(\lambda_i(0)\right)^2-1\right)}{\left(1-\lambda_i(0)\tanh t\right)^2}.
\end{equation}
Thus, $\Hc_{\Gamma_t}(\gamma_y(t))$ decrease if $\modulo{\lambda_i(0)} < 1$ for all $1\leq i \leq n$. However, if this is the case, %($\modulo{\lambda_i}<1$ or $\modulo{\lambda_i}\geq 1$), 
$\tau$ can be chosen small enough such that 
$$ \modulo{\Hc_{\partial\W}(y)-\Hc_{d(x)}(x)}\leq \kappa d(x)$$
for some $\kappa>0$ depending on $\W$. %because of \eqref{der_curv_princ_hiperb}. 
Using this fact we deduce the following result. 
\begin{teo}[Boundary gradient estimate - the hyperbolic case]\label{teo_Est_gradiente_fronteira_hiperbólico}
Let $\W\in \HH^n$ be a bounded domain with $\partial\W$ of class $\cl^2$ and $\varphi\in\cl^2(\overline{\W})$. 
Let $H\in\cl^{1}\left(\overline{\W}\times\R\right)$ satisfying $\Dz H\geq 0$,
and
$$(n-1)\Hc_{\partial\W}(y)\geq n \modulo{H(y,\varphi(y))} \ \forall \ y\in\partial\W.$$
If $u\in\cl^2(\W)\cap\cl^1(\overline{\W})$ solves problem \eqref{ProblemaP}, then
$$%\begin{equation}\label{EstGradFrontIII}
\sup\limits_{\partial\W}\norm{\nabla u}\leq \norm{\varphi}_1 + \ds e^{C\left(1+ \norm{H}_1+ \norm{\varphi}_2\right)\left(1+\norm{\varphi}_1\right)^3\left(\norm{u}_0+\norm{\varphi}_0\right)}
$$%\end{equation}
for some $C=C(n,\W)$.
\end{teo}
\begin{proof}
The proof follows the steps of the proof of Theorem \ref{teo_Est_gradiente_fronteira} with the difference that relation \eqref{est_usar_hiperb} is replaced by
\[%\begin{equation}\label{Lap_d_hiperb}
%\Delta d(x)\leq \Delta d(y) +(n-1)\kappa d(x)\leq -n\modulo{H(y,\varphi(y))} + n\kappa d(x).\qedhere
\Delta d(x)\leq -n\modulo{H(y,\varphi(y))} + (n-1)\kappa d(x). \qedhere
\]%\end{equation}
%In this case $C=2n\left(1+\kappa + \norm{d}_2+1/\tau\right)$ instead of \eqref{C_kappa}. 
\end{proof}
%\begin{align*}
%\lambda_i'(t)=\dfrac{\sechh^2(t)\left(\left(\lambda_i(0)\right)^2-1\right)}{\left(1-\lambda_i(0)\tanh(t)\right)^2}
%\end{align*}

\bigskip

%On the other hand, this behavior of $\Hc_{\Gamma_t}$ is guaranteed by the geometric condition
%\begin{equation}\label{eq_est_grad_curv_bordo_Ricc_obs}
%\Ricc_{\gamma_y(t)}(\gamma_y'(t))\geq -(n-1)\left(\Hc_{\partial\W}(y)\right)^2 \ \forall \ y\in\partial\W,
%\end{equation} 
%which can be seen by applying lemma \ref{lema_Est_altura_cond_ricci} to the constant function $h(t)=\Hc_{\partial\W}(y)$ (see also {\cite[Th. 1 p. 232]{Dajczer2008}}). 
%Therefore, assumption \eqref{cond_H_Ricci_sup_est_grad_fronteira} is not needed in the statement of theorem \ref{teo_Est_gradiente_fronteira} if \eqref{eq_est_grad_curv_bordo_Ricc_obs} holds. {So, we are able to establish the following result for later reference.}
On the other hand, applying lemma \ref{lema_Est_altura_cond_ricci} to the constant function $h(t)=\Hc_{\partial\W}(y)$ (see also {\cite[Th. 1 p. 232]{Dajczer2008}}) it can be seen that condition \eqref{paraObs} is guaranteed by the geometric condition
\begin{equation}\label{eq_est_grad_curv_bordo_Ricc_obs}
\Ricc_{\gamma_y(t)}(\gamma_y'(t))\geq -(n-1)\left(\Hc_{\partial\W}(y)\right)^2 \ \forall \ y\in\partial\W. 
\end{equation} 
%Therefore, assumption \eqref{cond_H_Ricci_sup_est_grad_fronteira} is not needed in the statement of theorem \ref{teo_Est_gradiente_fronteira} if \eqref{eq_est_grad_curv_bordo_Ricc_obs} holds. So, we are able to establish the following result for later reference.
Therefore, we state the following result for later reference.

\begin{teo}[Boundary gradient estimate - a particular case]\label{teo_Est_gradiente_fronteira_paraGalvez}
Let $M$ be a complete Riemannian manifold whose Ricci curvature satisfies $\Ricc \geq -(n-1)c^2$ for $c>0$. Let $\W\in M$ be a bounded domain with $\partial\W$ of class $\cl^2$ such that $\Hc_{\partial\W} \geq c$. Let $\varphi\in\cl^2(\overline{\W})$ and
%$H\in\cl^{1}\left(\overline{\W}\times\left[-\sup\limits_{\overline{\W}}\modulo{u},\sup\limits_{\overline{\W}}\modulo{u}\right]\right)$ 
$H\in\cl^{1}\left(\overline{\W}\times\R\right)$ satisfying $\Dz H\geq 0$
and
$$%\begin{equation}\label{cond_Serrin_2}
(n-1)\Hc_{\partial\W}(y)\geq n \modulo{H(y,\varphi(y))} \ \forall \ y\in\partial\W.
$$%\end{equation}
%Então existe $\varepsilon>0$, $\varepsilon=\varepsilon(\norm{u}_0,\norm{H}_1,\norm{\varphi}_2,n,\W)$ tal que se 
If $u\in\cl^2(\W)\cap\cl^1(\overline{\W})$ solves problem \eqref{ProblemaP}, then
$$%\begin{equation}\label{EstGradFrontII}
\sup\limits_{\partial\W}\norm{\nabla u}\leq \norm{\varphi}_1 + \ds e^{C\left(1+ \norm{H}_1+ \norm{\varphi}_2\right)\left(1+\norm{\varphi}_1\right)^3\left(\norm{u}_0+\norm{\varphi}_0\right)}
$$%\end{equation}
for some $C=C(n,\W)$.
\end{teo}
\begin{proof}
The hypothesis guaranty that \eqref{eq_est_grad_curv_bordo_Ricc_obs} holds, thus \eqref{paraObs} also holds as we explain in the remark preceding the statement of the theorem. 
The rest of the proof is the same as the proof of Theorem \ref{teo_Est_gradiente_fronteira}.
\end{proof}

%\subsection{A priori global gradient estimate}

In order to obtain a priori global gradient estimate the techniques introduced by Caffarelli-Nirenberg-Spruck \cite[p. 51]{CNSpruck} in the Euclidean context are used in a clever way. See other applications in the works of Nelli-Sa Earp \cite[Lemma 3.1 p. 4]{NelliRicardo} and  Barbosa-Sa Earp \cite[Lemma 5.2 p. 62]{BarbosaRicardo1998} in the hyperbolic setting.  
%We are able to prove the following theorem. 
\begin{teo}[Global gradient estimate]\label{teo_Est_global_gradiente}
Let $\W\subset M$ be a bounded domain. If a function $u\in\cl^3(\W)\cap\cl^1(\overline{\W})$ is a solution of \eqref{operador_minimo_1} for $H\in\cl^{1}\left(\W\times\left[-\sup\limits_{\overline{\W}}\modulo{u},\sup\limits_{\overline{\W}}\modulo{u}\right]\right)$ satisfying $\Dz H\geq 0$,
then
$$%\begin{equation}\label{est_global_final}
%\sup_{\W}\norm{\nabla u}\leq\left(\sqrt{3}+\sup\limits_{\partial\W}\norm{\nabla u}\right)\exp\left(2\sup\limits_{\W}\modulo{u}\left(1+8n\left(\norm{H}_1+\sup\limits_{\W}\modulo{\Ricc}\right)\right)\right).
%%%%%%%%%%%%%%%%%%%%%%%%%
\sup_{\W}\norm{\nabla u}\leq\left({3}+\sup\limits_{\partial\W}\norm{\nabla u}\right)e^{4n\left(1+\norm{H}_1+\sup\limits_{\W}\modulo{\Ricc}\right)\sup\limits_{\W}\modulo{u}}.
%%%%%%%%%%%%%%%%%%%%%%%%%%%%%%
$$%\end{equation}
%$H_1=\ds\norm{H}_{1,\W\times\left[-\sup\limits_{\overline{\W}}\modulo{u},\sup\limits_{\overline{\W}}\modulo{u}\right]}$ e 
%$R\geq 0$ is such that $\Ricc_x\geq -R$.
\end{teo}

\begin{proof}
Let $w(x)=\norm{\nabla u(x)}e^{Au(x)}$ where $A\geq 1$. Suppose $w$ attains a maximum at $x_0\in\overline{\W}$. If $x_0\in\partial\W$, then
$$w(x)\leq w(x_0) =\norm{\nabla u(x_0)}e^{Au(x_0)}.$$
Hence,
%$$\norm{\nabla u(x)}\leq\sup_{\partial\W}\norm{\nabla u}e^{A(u(x_0)-u(x))}, $$
%donde
\begin{equation}\label{est_global_1}
\sup_{\W}\norm{\nabla u(x)}\leq\sup_{\partial\W}\norm{\nabla u}e^{2A\sup\limits_{\W}\modulo{u}}. 
\end{equation}

Suppose now that $x_0\in\W$ and that $\nabla u(x_0)\neq 0$. Let us define normal coordinates at $x_0$ in such a way that $\frac{\partial}{\partial x_1}\big|_{x_0}=\frac{\nabla u(x_0)}{\norm{\nabla u(x_0)}}$. Then, 
\begin{equation}\label{der_u_x0}
\Dk u(x_0)=\escalar{\Ek\big|_{x_0}}{\nabla u(x_0)}%=\escalar{\Ek\big|_{x_0}}{\norm{\nabla u(x_0)}\Eum\big|_{x_0}}
=\norm{\nabla u(x_0)}\delta_{k1}.
\end{equation}
Besides, if $\sigma$ is the metric in this coordinate system, then
\begin{equation}\label{sigmaij}
\sigma_{ij}(x_0)=\sigma^{ij}(x_0)=\delta_{ij},
\end{equation}
\begin{equation}\label{deriv_sigmaij}
\Dk \sigma_{ij}(x_0)=\Dk \sigma^{ij}(x_0) =0,
\end{equation}
\begin{equation}\label{simb_Chris}
\Gamma_{ij}^k(x_0)=0.
\end{equation}

Observe now that the function $\tilde{w}(x)=\ln w(x)=Au(x)+\ln \norm{\nabla u(x)}$ also attains a maximum at $x_0$. 
%Therefore, $\Dk \tilde{w}(x_0)=0$ and $\Dkk \tilde{w}(x_0)\leq 0$ for each $0\leq k \leq n$. 
%Since 
%%$$\Dk \tilde{w}(x)= A\Dk u(x) +\dfrac{1}{2} \dfrac{1}{\norm{\nabla u(x)}^2} \Dk \left(\norm{\nabla u}^2\right)(x)$$
%$$\Dk \tilde{w}(x)= A\Dk u(x) + \dfrac{\Dk \left(\norm{\nabla u}^2\right)(x)}{2\norm{\nabla u(x)}^2}, $$
%$$ \Dkk \tilde{w}(x)=A\Dkk u(x) +\dfrac{1}{2}  \Dk\left({\norm{\nabla u}^{-2}}\right)(x)\Dk \left(\norm{\nabla u}^2\right)(x)+\dfrac{\Dkk \left(\norm{\nabla u}^2\right)(x)}{2\norm{\nabla u(x)}^2}$$
%and
%$$\Dk\left({\norm{\nabla u}^{-2}}\right)=\Dk\left({\norm{\nabla u}^{2}}\right)^{-1}=-\left({\norm{\nabla u}^{2}}\right)^{-2} \Dk\left({\norm{\nabla u}^{2}}\right),$$
%then
%$$ \Dkk \tilde{w}(x) = A\Dkk u(x) - \dfrac{\left(\Dk \left(\norm{\nabla u}^2\right)(x)\right)^2}{2\norm{\nabla u(x)}^{4}}
%+\dfrac{\Dkk \left(\norm{\nabla u}^2\right)(x)}{2\norm{\nabla u(x)}^2}.$$
%Hence,
Therefore, 
\begin{equation}\label{derk}
\Dk \tilde{w}(x_0)=A\Dk u(x_0) +\dfrac{\Dk \left(\norm{\nabla u}^2\right)(x_0)}{2\norm{\nabla u(x_0)}^2} =0,
\end{equation}
\noindent and
\begin{equation}\label{derkk}
\Dkk \tilde{w}(x_0)=A\Dkk u(x_0) - \dfrac{\left(\Dk \left(\norm{\nabla u}^2\right)(x_0)\right)^2}{2\norm{\nabla u(x_0)}^{4}}+\dfrac{\Dkk \left(\norm{\nabla u}^2\right)(x_0)}{2\norm{\nabla u(x_0)}^2}\leq 0.
\end{equation}
%%%%%%%%%%%%%%%%

Let us calculate the derivatives involved in these relations. Recall first that $\nabla u(x) = \ds\sum_{i}u^i \Ei$ where
\begin{equation}\label{exp_local_grad_1}
u^i=\sum_{j=1}^n\sigma^{ij} \Dj u.%\parcial{u}{x_j}
\end{equation}
Then
\begin{equation}\label{exp_local_norma_grad}
\norm{\nabla u(x)}^2=\sum_{i,j=1}^n\sigma^{ij}\Di u\Dj u
\end{equation}
and
%%%%%%%%%%%%%%%%%%%%%%%
%Since \eqref{exp_local_norma_grad} holds, then 
%$$\norm{\nabla u(x)}^2 = \ds\sum_{i,j=1}^n \sigma^{ij}\Di u \Dj u,$$
\begin{equation}\label{Dknorma}
\Dk \left(\norm{\nabla u}^2\right)= \ds\sum_{i,j=1}^n \left( \left(\Dk\sigma^{ij}\right)\Di u \Dj u+2 \sigma^{ij}\Dki u \Dj u\right).
\end{equation}
Using \eqref{der_u_x0}, \eqref{sigmaij} and \eqref{deriv_sigmaij} one gets
%{%\color{red}
%$$\Dk \left(\norm{\nabla u}^2\right)(x_0)= 2\ds\sum_{i,j=1}^n \delta_{ij}\Dki u (\norm{\nabla u(x_0)}\delta_{j1}), $$
%so}
\begin{equation}\label{Dknorm}
\Dk \left(\norm{\nabla u}^2\right)(x_0)=2 \norm{\nabla u(x_0)}\Dumk u(x_0).
\end{equation}%\vspace*{.1cm}

\noindent Substituting this last expression and \eqref{der_u_x0} in \eqref{derk} it follows
%{%\color{red}
%$$ A\norm{\nabla u (x_0)}\delta_{k1} + \dfrac{2 \norm{\nabla u(x_0)}\Dumk u(x_0)}{2\norm{\nabla u(x_0)}^2} =0,$$
%thus,}%
\begin{equation}\label{derk_2}
\Dumk u(x_0)=-A\norm{\nabla u (x_0)}^2\delta_{k1}.
\end{equation}%\vspace*{.1cm}

\noindent The combination of \eqref{Dknorm} and \eqref{derk_2} finally yields
\begin{equation}\label{Dknorm_2}
\Dk \left(\norm{\nabla u}^2\right)(x_0)=-2A\norm{\nabla u (x_0)}^3\delta_{k1}.
\end{equation}

Deriving now \eqref{Dknorma} it follows
\begin{align*}
 \Dkk \left(\norm{\nabla u}^2\right)(x)
%=&\ds\sum_{i,j=1}^n \left(\Dk\left(\left(\Dk\sigma^{ij}\right)\Di u \Dj u\right)+2\ds \Dk\left(\sigma^{ij}\Dki u \Dj u\right)\right)\\
=&\ds\sum_{i,j=1}^n \left(\left(\Dkk\sigma^{ij}\right)\Di u \Dj u + \left(\Dk\sigma^{ij}\right)\Dk\left(\Di u \Dj u\right)\right.\\
 &\left.+2\left( \left(\Dk\sigma^{ij}\right) \Dki u\Dj u + \sigma^{ij} \Dkki u \Dj u +\sigma^{ij}\Dki u\Dkj u\right)\right).
\end{align*}
%Avaliando em $x_0$ usando \eqref{sigmaij} e \eqref{deriv_sigmaij} temos
%\begin{align*}
%\Dkk \left(\norm{\nabla u}^2\right)(x_0)=&\ds\sum_{i,j=1}^n \left(\left(\Dkk\sigma^{ij}\right)\Di u(x_0) \Dj u(x_0)+2\left( \delta_{ij} \Dkki u \Dj u +\delta_{ij}\Dki u\Dkj u\right)\right).
%%\\
%%=&\ds\sum_{i,j=1}^n \left(\left(\Dkk\sigma^{ij}\right) \norm{\nabla u(x_0)}^2\delta_{i1}\delta_{j1}+2\left( \delta_{ij} \Dkki u (\norm{\nabla u(x_0)}\delta_{j1}) +\delta_{ij}\Dki u\Dkj u\right)\right),
%\end{align*}
Relations \eqref{der_u_x0}, \eqref{sigmaij} and \eqref{deriv_sigmaij} are used again to obtain
\begin{equation}\label{Dkk_2}
%\begin{split}
%\Dkk \left(\norm{\nabla u}^2\right)(x_0)=\norm{\nabla u(x_0)}^2 \left(\Dkk\sigma^{11}\right) +2\norm{\nabla u(x_0)}\Dkkum u \\ +2 \ds\sum_{i=1}^n (\Dki u(x_0))^2.
%\end{split}
\Dkk \left(\norm{\nabla u}^2\right)(x_0)=\norm{\nabla u(x_0)}^2 \left(\Dkk\sigma^{11}\right)(x_0) +2\norm{\nabla u(x_0)}\Dkkum u(x_0) +2 \ds\sum_{i=1}^n (\Dki u(x_0))^2.
\end{equation} 
%Differentiating two times with respect to $x_k$ the equation $\sigma\circ\sigma^{-1}=Id$ and evaluating at $x_0$ it can be seen that $\Dkk \sigma^{-1}(x_0)=-\Dkk \sigma(x_0)$. Besides,
Also, from \eqref{sigmaij}, \eqref{deriv_sigmaij} and \eqref{simb_Chris} it can be seen that
%\begin{align*}
%%\Dkk \sigma_{11}=&\Ek \Ek\escalar{\Eum}{\Eum}=2\Ek\escalar{\conex_{\Ek}\Eum}{\Eum}\\
%%=&2\left(\escalar{\conex_{\Ek}\conex_{\Ek}\Eum}{\Eum} + \norm{\conex_{\Ek}\Eum}^2 \right).
%%\Dkk \sigma_{11}=&\Ek \Ek\escalar{\Eum}{\Eum}=2\left(\escalar{\conex_{\Ek}\conex_{\Ek}\Eum}{\Eum} + \norm{\conex_{\Ek}\Eum}^2 \right), 
%%\Dkk \sigma_{11}=2\left(\escalar{\conex_{\Ek}\conex_{\Ek}\Eum}{\Eum} + \norm{\conex_{\Ek}\Eum}^2 \right), 
%\Dkk \sigma_{11}(x_0)=2\escalar{\conex_{\Ek}\conex_{\Ek}\Eum}{\Eum} 
%%=&2\escalar{\conex_{\Ek}\conex_{\Ek}\Eum}{\Eum}.
%\end{align*}
%because of \eqref{simb_Chris}, then
$$%\begin{equation}\label{Dkk_sigma}
\Dkk\sigma^{11}(x_0)=-\Dkk\sigma_{11}(x_0)=-2\escalar{\conex_{\Ek}\conex_{\Ek}\Eum}{\Eum}.
$$%\end{equation}
Therefore, %Using \eqref{Dkk_sigma} in \eqref{Dkk_2} 
{\small
\begin{equation}\label{dkknorma_2}
\begin{split}%{array}{rl}
%\Dkk \left(\norm{\nabla u}^2\right)(x_0)=&2\left(\norm{\nabla u(x_0)}\ds \Dkkum u(x_0)-\norm{\nabla u (x_0)}^2\escalar{\conex_{\Ek}\conex_{\Ek} \Eum}{\Eum}\right.\\
%&\pushright{\left.+\ds\sum_{i=1}^n\left(\Dki u(x_0)\right)^2\right)}.
%\Dkk \left(\norm{\nabla u}^2\right)(x_0)=2\Big(&-\norm{\nabla u (x_0)}^2\escalar{\conex_{\Ek}\conex_{\Ek} \Eum}{\Eum}\\& +\norm{\nabla u(x_0)}\ds \Dkkum u(x_0) +\ds\sum_{i=1}^n\left(\Dki u(x_0)\right)^2\Big).
\Dkk \left(\norm{\nabla u}^2\right)(x_0)=2\left(-\norm{\nabla u (x_0)}^2\escalar{\conex_{\Ek}\conex_{\Ek} \Eum}{\Eum} +\norm{\nabla u(x_0)}\ds \Dkkum u(x_0) +\ds\sum_{i=1}^n\left(\Dki u(x_0)\right)^2\right).
\end{split}%{array}
\end{equation}}%

Using \eqref{Dknorm_2} and \eqref{dkknorma_2} inequality \eqref{derkk} becomes
{%\color{red}
%{\footnotesize
%\begin{align*}
%&A\Dkk u(x_0)-\dfrac{\left(-2A\norm{\nabla u (x_0)}^3\delta_{k1}\right)^2}{2\norm{\nabla u(x_0)}^4}\\
%&+\dfrac{2\left(\norm{\nabla u(x_0)}\ds \Dkkum u(x_0)-\norm{\nabla u (x_0)}^2\escalar{\conex_{\Ek}\conex_{\Ek} \Eum}{\Eum}+\ds\sum_{i=1}^n\left(\Dki u(x_0)\right)^2\right)}{2\norm{\nabla u(x_0)}^2}\leq 0.
%\end{align*}}%
%Logo, 
\[
\begin{split}
A\Dkk u(x_0)-2A^2\norm{\nabla u (x_0)}^2\delta_{k1}+\ds \dfrac{\Dkkum u(x_0)}{\norm{\nabla u(x_0)}}-\escalar{\conex_{\Ek}\conex_{\Ek} \Eum}{\Eum}
+\dfrac{\ds\sum_{i=1}^n\left(\Dki u(x_0)\right)^2}{\norm{\nabla u (x_0)}^2} \leq 0.
\end{split}
\]}%
Since \eqref{derk_2} holds it can be inferred that
%{\small%\color{red}
%\[\begin{split}
%%\geq  A\Dumum u(x_0)-2A^2\norm{\nabla u (x_0)}^2+\ds \dfrac{\Dumumum u(x_0)}{\norm{\nabla u(x_0)}}-\escalar{\conex_{\Eum}\conex_{\Eum} \Eum}{\Eum}\\
%%+\dfrac{\ds\sum_{i=1}^n\left(\Dumi u(x_0)\right)^2}{\norm{\nabla u (x_0)}^2} \\
%%-A^2\norm{\nabla u (x_0)}^2-2A^2\norm{\nabla u (x_0)}^2+\ds \dfrac{\Dumumum u(x_0)}{\norm{\nabla u(x_0)}}-\escalar{\conex_{\Eum}\conex_{\Eum} \Eum}{\Eum}&\\ +\dfrac{\ds\sum_{i=1}^n\left(-A\norm{\nabla u(x_0)}^2\right)^2\delta_{i1}}{\norm{\nabla u (x_0)}^2}& \leq 0,
%-A^2\norm{\nabla u (x_0)}^2-2A^2\norm{\nabla u (x_0)}^2+\ds \dfrac{\Dumumum u(x_0)}{\norm{\nabla u(x_0)}}-\escalar{\conex_{\Eum}\conex_{\Eum} \Eum}{\Eum}+\dfrac{\ds\sum_{i=1}^n\left(-A\norm{\nabla u(x_0)}^2\right)^2\delta_{i1}}{\norm{\nabla u (x_0)}^2}& \leq 0,
%\end{split}\]
%then,}%
\begin{equation}\label{est_Dumumum}
\Dumumum u(x_0)\leq 2A^2\norm{\nabla u (x_0)}^3+\norm{\nabla u(x_0)}\escalar{\conex_{\Eum}\conex_{\Eum} \Eum}{\Eum}.
\end{equation}
\noindent and 
%If $k>1$, then
%{%\color{red}
%\begin{align*}
%A\Dkk u(x_0)+\ds \dfrac{\Dkkum u(x_0)}{\norm{\nabla u(x_0)}}-\escalar{\conex_{\Ek}\conex_{\Ek} \Eum}{\Eum}\leq-\dfrac{\ds\sum_{i=1}^n\left(\Dki u(x_0)\right)^2}{\norm{\nabla u (x_0)}^2} \leq 0,
%\end{align*}}
%so,
\begin{equation}\label{est_Dkkum}
\Dkkum u(x_0)\leq -A\Dkk u(x_0)\norm{\nabla u (x_0)}+\norm{\nabla u(x_0)}\escalar{\conex_{\Ek}\conex_{\Ek} \Eum}{\Eum} \ \mbox{ if }\  k>1.
\end{equation}

In the sequel we evaluate at $x_0$ the mean curvature equation \eqref{operador_minimo_1_coord}.
%, this is, 
%\begin{equation}\label{operador_minimo_2}
%\ds\sum_{i,j=1}^n \left(W^2 \sigma^{ij} - u^iu^j \right)\Hessij u=nH(x,u)W^3.
%\end{equation}
First, recall that 
\begin{equation}\label{hessianohij_coord}
\Hessij u(x)=\Hess u(x){\left(\Ei,\Ej\right)}=\Dij u-\ds\sum_{k=1}^n\Gamma_{ij}^k \Dk u,
\end{equation}
\begin{equation}\label{laplacianof}
\Delta u(x)=%\tr \left(X\longrightarrow\conex_X\nabla u\right)=
\ds\sum_{ij}\sigma^{ij}\Hessij  u (x).
\end{equation}

\medskip

\noindent Using \eqref{der_u_x0}, \eqref{sigmaij} and \eqref{simb_Chris} it can easily be seen that the quantities \eqref{exp_local_grad_1}, \eqref{hessianohij_coord} and \eqref{laplacianof} at $x_0$ have the values
\begin{equation}\label{ui}
u^i(x_0)=\Di u(x_0)=\norm{\nabla u(x_0)}\delta_{i1},
\end{equation}
\begin{equation}\label{hess_x0}
\Hessij u(x_0)=\Dij u(x_0),
\end{equation}
\begin{equation}\label{laplac_x0}
\Delta u(x_0)=\ds\sum_{i=1}^n \Dii u(x_0).
\end{equation}

\noindent Using these expressions the mean curvature equation \eqref{operador_minimo_1_coord} at $x_0$ takes the form
\begin{align*}
 %nH_0 W_0^3=& W_0^2 \Delta u(x_0) - \ds\sum_{i,j=1}^n \left(\norm{\nabla u(x_0)}\delta_{i1} \right)\left(\norm{\nabla u(x_0)}\delta_{j1} \right) \Dij u\\[1em]
%=&W_0^2 \Delta u(x_0)-\norm{\nabla u(x_0)}^2\Dumum u(x_0)\\[1em]
%=&W_0^2\ds\sum_{i>1}\Dii u(x_0) + \Dumum u(x_0)\\[1em]
%=&W_0^2\ds\sum_{i>1}\Dii u(x_0) -A \norm{\nabla u(x_0)}^2,
%%%%%%nova%%%%%%%%%%%%%%%%
 nH_0 W_0^3=W_0^2 \Delta u(x_0)-\norm{\nabla u(x_0)}^2\Dumum u(x_0)=W_0^2\ds\sum_{i>1}\Dii u(x_0) + \Dumum u(x_0),
\end{align*}
where $H_0=H(x_0,u(x_0))$ and $W_0=\sqrt{1+\norm{\nabla u(x_0)}^2}$. Using \eqref{derk_2} again it follows
\begin{equation}\label{equ_curv_media_MxR_Delta_x0}
\ds\sum_{i>1}\Dii u(x_0) = nH_0W_0+\dfrac{A\norm{\nabla u(x_0)}^2}{W_0^2}.
\end{equation}

Finally let us differentiate \eqref{operador_minimo_1_coord} with respect to $x_1$ and evaluate at $x_0$. We have 
\begin{equation}\label{der_MCE}
%\begin{array}{rl}
\begin{split}
\left(\Dum \left(W^2\right)\right)\Delta u+W^2\left(\Dum\Delta u\right)-2\ds\sum_{i,j=1}^nu^i\left(\Dum u^j\right) \Hessij u -\sum_{i,j=1}^n u^i u^j \left(\Dum\Hessij u\right)
\\
=n(\Dum H+\Dz H\Dum u)W^3 + nH\left(\Dum \left(W^3\right)\right).
%\end{array}
\end{split}
\end{equation}
%$\Dum W^2=\Dum\left(\norm{\nabla u}^2\right)$ e 
Expression \eqref{Dknorm_2} immediately gives 
\begin{equation}\label{dW2}
%\Dum\left(W^2\right)(x_0)=\Dum\left(\norm{\nabla u}^2\right)(x_0)=-2A\norm{\nabla u (x_0)}^3,
\Dum \left(W^2 \right)(x_0)=\Dum\left(\norm{\nabla u}^2\right)(x_0)=-2A\norm{\nabla u (x_0)}^3,
\end{equation}
\begin{equation}\label{dW3}
%\Dum\left(W^3\right)(x_0)=\frac{3}{2}W_0\Dum \left(W^2\right)(x_0)=-3AW_0\norm{\nabla u (x_0)}^3.
\Dum \left(W^3\right) (x_0)=\frac{3}{2}W_0\Dum \left(W^2\right)(x_0)=-3AW_0\norm{\nabla u (x_0)}^3.
\end{equation}

Deriving \eqref{exp_local_grad_1} 
%it follows
%$$\Dum u^i
%%=\Dum \ds\sum_{j=1}^n\sigma^{ij}\Dj u
%=\ds\sum_{j=1}^n\left(\left(\Dum\sigma^{ij}\right)\Dj u + \sigma^{ij}\Dumj u\right).$$
and using \eqref{sigmaij}, \eqref{deriv_sigmaij} and \eqref{derk_2} one gets
\begin{equation}\label{dui}
\Dum u^i(x_0)=\Dumi u(x_0) =-A\norm{\nabla u(x_0)}^2\delta_{i1}.
\end{equation}

On the other hand, from \eqref{hessianohij_coord} we deduce
{%\color{red}
\begin{align*}
\Dum\Hessij u(x)%=&\Eum\escalar{\conex_{\Ei} \nabla u}{\Ej}\\
%=&\Eum\left(\Ei\escalar{\nabla u}{\Ej}-\escalar{\nabla u}{\conex_{\Ei}\Ej}\right)\\
=&\Dumij u(x) - \escalar{\conex_{\Eum} \nabla u}{\conex_{\Ei}\Ej}-\escalar{\nabla u(x)}{\conex_{\Eum}\conex_{\Ei}\Ej}.
\end{align*}}%
Consequently, since \eqref{simb_Chris} holds,
\begin{equation}\label{DumHessiano}
\Dum\Hessij u(x_0)=\Dumij u(x_0) - \escalar{\conex_{\Eum}\conex_{\Ei}\Ej}{\nabla u(x_0)}.
\end{equation}
Finally, deriving \eqref{laplacianof}
%$$\Dum \Delta u(x)= \ds\sum_{ij}\left(\left(\Dum \sigma^{ij}\right)\Hessij  u (x) +\sigma^{ij}\Dum\left(\Hessij  u (x)\right)\right).$$
and using \eqref{sigmaij}, \eqref{deriv_sigmaij} and \eqref{DumHessiano} one can infer
\begin{equation}\label{DumLaplaciano}
\Dum\Delta u(x_0)=\ds\sum_{i=1}^n\left(\Dumii u(x_0) - \escalar{\conex_{\Eum}\conex_{\Ei}\Ei}{\nabla u(x_0)}\right).
\end{equation}

\bigskip

Substituting \eqref{der_u_x0}, \eqref{ui}, \eqref{hess_x0}, \eqref{dW2}, \eqref{dW3}, \eqref{dui}, \eqref{DumHessiano} and \eqref{DumLaplaciano} in \eqref{der_MCE} we obtain 
{%\color{red}
%\begin{align*}
%&\left(-2A\norm{\nabla u(x_0)}^3\right)\Delta u(x_0)+W_0^2\ds\sum_{i=1}^n\left(\Dumii u(x_0) - \escalar{\conex_{\Eum}\conex_{\Ei}\Ei}{\nabla u(x_0)}\right)\\
%&-2\ds\sum_{i,j=1}^n\left(-A\norm{\nabla u(x_0)}^2\delta_{i1}\right)\left(\norm{\nabla u(x_0)}\delta_{j1}\right)\Dij u\\
%&-\ds\sum_{i,j=1}^n\left(\norm{\nabla u(x_0)}\delta_{i1}\right)\left(\norm{\nabla u(x_0)}\delta_{j1}\right)\left(\Dumij u(x_0) - \escalar{\conex_{\Eum}\conex_{\Ei}\Ej}{\nabla u(x_0)}\right)\\
%=&n\Dum HW_0^3 +n \Dz H (\norm{\nabla u(x_0)}) W^3+n H\left(-3AW_0\norm{\nabla u(x_0)}^3\right)\\
%\end{align*}
%Logo,
\begin{align*}
&n\Dum H(x_0) W_0^3 + n \Dz H(x_0)\norm{\nabla u(x_0)} W_0^3-3nA H_0 W_0\norm{\nabla u(x_0)}^3\\[1em]
%%%%%%%%%
=&-2A\norm{\nabla u(x_0)}^3\Delta u(x_0)+W_0^2\ds\sum_{i=1}^n\left(\Dumii u(x_0) - \escalar{\conex_{\Eum}\conex_{\Ei}\Ei}{\nabla u(x_0)}\right)\\
&+2A\norm{\nabla u(x_0)}^3\Dumum u(x_0)-\norm{\nabla u(x_0)}^2\left(\Dumumum u(x_0) - \escalar{\conex_{\Eum}\conex_{\Eum}\Eum}{\nabla u(x_0)}\right)\\[1em]
%%%%%%%%%
%=&-2A\norm{\nabla u(x_0)}^3\left(\Delta u(x_0)-\Dumum u(x_0)\right)+W_0^2\ds\sum_{i>1}\left(\Dumii u(x_0) - \escalar{\conex_{\Eum}\conex_{\Ei}\Ei}{\nabla u(x_0)}\right)\\
%&+W_0^2\left(\Dumumum u(x_0) -\norm{\nabla u(x_0)} \escalar{\conex_{\Eum}\conex_{\Eum}\Eum}{\Eum}\right)\\
%&-\norm{\nabla u(x_0)}^2\left(\Dumumum u(x_0) - \escalar{\conex_{\Eum}\conex_{\Eum}\Eum}{\nabla u(x_0)}\right)\\[1em]
%%%%%%%%%
=&-2A\norm{\nabla u(x_0)}^3\ds\sum_{i>1}\Dii u(x_0)+{\Dumumum u(x_0) - \escalar{\conex_{\Eum}\conex_{\Eum}\Eum}{\nabla u(x_0)}}\\
&+W_0^2\ds\sum_{i>1}\left(\Dumii u(x_0) - \escalar{\conex_{\Eum}\conex_{\Ei}\Ei}{\nabla u(x_0)}\right).
\end{align*}
%Logo,
%\begin{align*}
%0
%=&-2A\norm{\nabla u(x_0)}^3\left(\Delta u(x_0)-\Dumum u\right)\\
%&+W_0^2\ds\sum_{i>1}\left(\Dumii u(x_0) - \escalar{\conex_{\Eum}\conex_{\Ei}\Ei}{\nabla u(x_0)}\right)\\
%&+W_0^2\left(\Dumumum u(x_0) -\norm{\nabla u(x_0)} \escalar{\conex_{\Eum}\conex_{\Eum}\Eum}{\Eum}\right)\\
%&-\norm{\nabla u(x_0)}^2\left(\Dumumum u(x_0) - \escalar{\conex_{\Eum}\conex_{\Eum}\Eum}{\nabla u(x_0)}\right)\\
%&-n\Dum HW_0^3 -n \Dz H\norm{\nabla u(x_0)} W_0^3+3nA H_0 W_0\norm{\nabla u(x_0)}^3\\
%%%
%=&-2A\norm{\nabla u(x_0)}^3\ds\sum_{i>1}\Dii u(x_0)\\
%&+W_0^2\ds\sum_{i>1}\left(\Dumii u(x_0) - \escalar{\conex_{\Eum}\conex_{\Ei}\Ei}{\nabla u(x_0)}\right)\\
%&+\Dumumum u(x_0) -\norm{\nabla u(x_0)} \escalar{\conex_{\Eum}\conex_{\Eum}\Eum}{\Eum}\\
%&-n\Dum HW_0^3 -n \Dz H\norm{\nabla u(x_0)} W_0^3+3nA H_0 W_0\norm{\nabla u(x_0)}^3.
%\end{align*}
}
%%%AQUI%%%%
Using \eqref{est_Dumumum}, \eqref{est_Dkkum}, \eqref{equ_curv_media_MxR_Delta_x0} and recalling that $\Dz H\geq 0$ we derive
%and that 
%$$\Ricc_x\left(\Eum,\Eum\right)=\ds\sum_{i > 1} \escalar{R\left(\Ei,\Eum\right)\Eum}{\Ei},$$
\begin{align*}
&n\Dum H(x_0) W_0^3 -3nA H_0 W_0\norm{\nabla u(x_0)}^3\\[1em]
\leq&-2A\norm{\nabla u(x_0)}^3\ds\sum_{i>1}\Dii u(x_0)+2A^2\norm{\nabla u(x_0)}^3\\
&+W_0^2\norm{\nabla u(x_0)}\ds\sum_{i>1}\left(-A\Dii u(x_0) +\escalar{\conex_{\Ei}\conex_{\Ei}\Eum}{\Eum} - \escalar{\conex_{\Eum}\conex_{\Ei}\Ei}{\Eum}\right) \\[1em]
%%%%%%%%%%%%%%%%%%%%%%%%%%
%=&-2A\norm{\nabla u(x_0)}^3\ds\sum_{i>1}\Dii u(x_0)\\
%&-A\norm{\nabla u(x_0)}W_0^2\ds\sum_{i>1}\Dii u(x_0) +2A^2\norm{\nabla u(x_0)}^3 \\
%&+\norm{\nabla u(x_0)}W_0^2\ds\sum_{i>1}\left(\escalar{\conex_{\Ei}\conex_{\Eum}\Ei}{\Eum} - \escalar{\conex_{\Eum}\conex_{\Ei}\Ei}{\Eum}\right)\\[2em]
%%%%%%%%%%%%%%%%%%%%%%%%%%%%%%%%
=&-A\norm{\nabla u(x_0)}\left(W_0^2+2\norm{\nabla u(x_0)}^2\right)\ds\sum_{i>1}\Dii u(x_0)+2A^2\norm{\nabla u(x_0)}^3 \\&+\norm{\nabla u(x_0)}W_0^2\ds\sum_{i>1}\escalar{R\left(\Ei,\Eum\right)\Ei}{\Eum}\\[1em]
%%%%%%%%%%%%%%%%%%%%%%%%%%%%%%%%
=&-A\norm{\nabla u(x_0)}\left(1+3\norm{\nabla u(x_0)}^2\right)\left(nH_0W_0+\dfrac{A\norm{\nabla u(x_0)}^2}{W_0^2} \right)\\
& +2A^2\norm{\nabla u(x_0)}^3 -\norm{\nabla u(x_0)}W_0^2\Ricc_{x_0}\left(\Eum\right)\\[1em]
=&-A\norm{\nabla u(x_0)}nH_0W_0\left(1+3\norm{\nabla u(x_0)}^2\right)
-\dfrac{A^2\norm{\nabla u(x_0)}^3}{W_0^2}\left(1+3\norm{\nabla u(x_0)}^2\right) \\
& +2A^2\norm{\nabla u(x_0)}^3 -\norm{\nabla u(x_0)}W_0^2\Ricc_{x_0}\left(\Eum\right).
\end{align*}
%%%%%%%%%%%%%
Hence,
\begin{align*}
\dfrac{A^2\norm{\nabla u(x_0)}^3\left(\norm{\nabla u(x_0)}^2-1\right)}{W_0^2} \leq & - A n H_0 W_0\norm{\nabla u(x_0)}- n\Dum H(x_0) W_0^3 -\norm{\nabla u(x_0)}W_0^2\Ricc_{x_0}\left(\Eum\right)\\
\leq & A n h_0 W_0\norm{\nabla u(x_0)} + n h_1 W_0^3 +\norm{\nabla u(x_0)}W_0^2 R,
\end{align*}
where $h_0=\sup\limits_{\W\times\left[-\norm{u}_0,\norm{u}_0\right]}\modulo{H}$, $h_1=\sup\limits_{\W\times\left[-\norm{u}_0,\norm{u}_0\right]}\left(\norm{\nabla_x H}+\Dz H\right)$
%\begin{align*}
%h_0=&\sup\limits_{\W\times\left[-\sup\limits_{\W}\modulo{u},\sup\limits_{\W}\modulo{u}\right]}\modulo{H}\\
%h_1=&\sup\limits_{\W\times\left[-\sup\limits_{\W}\modulo{u},\sup\limits_{\W}\modulo{u}\right]}\left(\norm{\nabla_x H}+\Dz H\right).
%\end{align*}
%$R\geq 0$ tal que $-R\leq\inf\limits_{\W}\Ricc$. 
and $R=\sup\limits_{\W}\modulo{\Ricc}$. 
%Dividing by $W_0^3$ it follows 
Therefore,
%\begin{align*}
%\dfrac{A^2\norm{\nabla u(x_0)}^3\left(\norm{\nabla u(x_0)}^2-1\right)}{W_0^5}
%&\leq A n h_0 \dfrac{\norm{\nabla u(x_0)}}{W_0^2} + n h_1 +\dfrac{\norm{\nabla u(x_0)}}{W_0} R, \\ 
%&\leq A n h_0  + n h_1 + R, \tag{*} \\
%&\leq A n \left( h_0  + h_1 + R \right) \tag{**} %\\ 
%%&= An \left(\norm{H}_1+R\right),
%\end{align*}
\begin{align*}
\dfrac{A^2\norm{\nabla u(x_0)}^3\left(\norm{\nabla u(x_0)}^2-1\right)}{W_0^5}\leq A n h_0  + n h_1 + R \leq A n \left( h_0  + h_1 + R \right) 
%&= An \left(\norm{H}_1+R\right),
\end{align*}
since $W_0^2> W_0 >\norm{\nabla u(x_0)}$ and $A,n>1$. Choosing $A=2n\left(1+\norm{H}_1+R\right)$ it follows
$$ \dfrac{\norm{\nabla u(x_0)}^3\left(\norm{\nabla u(x_0)}^2-1\right)}{W_0^5}\leq \dfrac{n}{A} \left(\norm{H}_1+R\right)<\frac{1}{2}  ,$$
which implies
$$ \norm{\nabla u(x_0)}<{3}.$$
As a consequence,
%$$w(x)\leq w(x_0) =\norm{\nabla u(x_0)}e^{Au(x_0)}\leq{3}e^{2n\left(\norm{H}_1+R\right)u(x_0)},$$
%%assim
%%$$\norm{\nabla u(x)}\leq\sqrt{3}e^{A(u(x_0)-u(x))}, $$
%%donde
%thus 
\begin{equation}\label{est_global_grad_interior}
%\sup_{\W}\norm{\nabla u(x)}\leq{3}e^{4n\left(\norm{H}_1+R\right)\sup\limits_{\W}\modulo{u}}. 
\sup_{\W}\norm{\nabla u}\leq{3}e^{2A\sup\limits_{\W}\modulo{u}}. 
\end{equation}

The combination of \eqref{est_global_1} with \eqref{est_global_grad_interior} yields the desired estimate.
%Joining \eqref{est_global_1} and \eqref{est_global_grad_interior} we obtain
%\begin{equation*}
%\sup_{\W}\norm{\nabla u(x)}\leq{3}e^{2A\scriptstyle\sup\limits_{\W}\modulo{u}} + \sup\limits_{\partial\W}\norm{\nabla u}e^{2A\scriptstyle\sup\limits_{\W}\modulo{u}},
%\end{equation*}
%which yields the desired estimate.
\end{proof}

\begin{obs}
A related global gradient estimate was obtained independently in \cite[Prop. 2.2 p. 5]{2018miriam}. 
\end{obs}
%%%%%%%%%%%%%%%%%%EXISTENCE%%%%%%%%%%%%%%%%%%%%%%%%%%%%%%%
\section{Proof of the existence theorems} 
%\begin{prop}
%Let $\Omega \subset M$ be a bounded domain with $\partial\W$ of class $\cl^{2,\alpha}$ for some $\alpha\in(0,1)$ and $\varphi\in\cl^{2,\alpha}(\overline{\W})$. 
%Let $H\in\cl^{1,\alpha}(\overline{\W}\times\R)$ satisfying $\Dz H \geq 0$ and
%\begin{equation}\label{cond_H_Ricci_exist_prop}
%\Ricc_x\geq n\sup\limits_{z\in\R}\norm{\nabla_x H(x,z)}-\dfrac{n^2}{n-1}\ds\inf_{z\in\R}\left(H(x,z)\right)^2, \ \forall \ x\in\W.
%\end{equation}
%Suppose also that
%\begin{equation}\label{SerrinCondition_exist}
%%(n-1)\Hc_{\partial\W}(y)\geq n \sup\limits_{z\in\R}\modulo{H\left(y,z\right)}, \ \forall \ y\in\partial\W,
%(n-1)\Hc_{\partial\W}(y)\geq n \modulo{H(y,\tau\varphi(y))}, \ \forall \ \tau\in[0,1],\ \forall \ y\in\partial\W.
%\end{equation}
%Then problem \eqref{ProblemaP} has a unique solution $u\in\cl^{2,\alpha}(\overline{\W})$.
%\end{prop}

\bigskip

%%%%%%%%%%%%%%%%%%%%%%%%%%%%%
%%%%PROVA TEOREMA RICCI%%%%%%%%%%%%%%%%%%%%%%%%%%
\noindent\textit{Proof of the main theorem (Theorem \ref{T_exist_Ricci}).}
Let $\W\subset M$ with $\partial\W$ of class $\cl^{2,\alpha}$ for some $\alpha\in(0,1)$ and $\varphi\in\cl^{2,\alpha}(\overline{\W})$. 
Elliptic theory assures that the solvability of the Dirichlet problem \eqref{ProblemaP}  strongly depends on $\cl^{1}$ a priori estimates for the family of related problems 
\begin{equation}\tag{$P_{\tau}$}\label{ProblemaPsigma}
%P_{\tau}\ 
\left\{\begin{array}{l}
 \diver\left(\dfrac{\nabla u}{W}\right)  = \tau nH(x,u)\ \mbox{ in }\ \W, \\
 \phantom{\diver\left(\dfrac{\nabla u}{W}\right)}      u  = \tau \varphi \ \mbox{ in }\  \partial\Omega,
\end{array}\right.
\end{equation}
not depending on $\tau$ or $u$. %The estimates established before are essentials to proof our existence results.
%Observe that the strong Serrin condition \eqref{StrongSerrinCondition} implies
%\begin{equation}\label{SerrinCondition_exist}
%%(n-1)\Hc_{\partial\W}(y)\geq n \sup\limits_{z\in\R}\modulo{H\left(y,z\right)}, \ \forall \ y\in\partial\W,
%(n-1)\Hc_{\partial\W}(y)\geq n \modulo{H(y,\tau\varphi(y))} \ \forall \ \tau\in[0,1],\ \forall \ y\in\partial\W.
%\end{equation}

Let $u$ be a solution of problem \eqref{ProblemaPsigma} for arbitrary $\tau\in[0,1]$. 
%Theorem \ref{teo_Est_altura} provides us an a priori height estimate. %para a família de soluções dos problemas  \eqref{ProblemaPsigma} independente de $\tau$. 
%Actually, if $w=\phi\circ d + \sup\limits_{\partial\W}\modulo{\varphi}$ as in theorem \ref{teo_Est_altura}
Let $w=\phi\circ d + \sup\limits_{\partial\W}\modulo{\varphi}$ as in the proof of Theorem \ref{teo_Est_altura}. Then
$$ u\leq \sup\limits_{\partial\W} \modulo{\tau\varphi}\leq \sup\limits_{\partial\W} \modulo{\varphi}=w\ \mbox{ on } \ \partial\W.$$
As before, let $\W_0$ be the biggest open subset of $\W$ having the unique nearest point property. Let $x\in\W_0$ and $y=y(x)\in\partial\W$ the nearest point to $x$. Once \eqref{eq_Hxw} holds and $\tau\in[0,1]$ we have that 
$$ \mp n \tau {H(x,\pm w)} \leq n \tau\modulo{H(x,\varphi(y))} \leq n \modulo{H(x,\varphi(y))}. $$
From \eqref{est_Mw_est_alt} we have
%\begin{align*}
%\M w\leq & -n \modulo{H(x,\varphi(y))}\leq  -n \tau\modulo{H(x,\varphi(y))}.
%%\leq  \pm n \tau {H\left(x,\pm\sup\limits_{\partial\W}\modulo{\varphi}\right)}.
%\end{align*}
\begin{align*}
%\pm\Q_{\tau}(\pm w)=&\M w\mp n\tau H(x,\pm w)\\
								 %\leq &\pm n \tau {H\left(x,\pm\sup\limits_{\partial\W}\modulo{\varphi}\right)} 
								       %\mp n\tau H\left(x,\pm \sup\limits_{\partial\W} \varphi\right).
\pm\Q_{\tau}(\pm w)=\M w\mp n\tau H(x,\pm w)(1+\phi'^2)^{3/2}
%\leq \M w + n\modulo{H(x,\varphi(y))}(1+\phi'^2)^{3/2}
\leq 0.
\end{align*}
Proceeding as in the proof of Theorem \ref{teo_Est_altura}, we get that $w$ and $-w$ are supersolution and subsolution in $\W_0$, respectively, for the problem \eqref{ProblemaPsigma}. This provides a priori height estimate for any solution of the problems \eqref{ProblemaPsigma} independently of $\tau$.

{%
In order to prove that Theorem \ref{teo_Est_gradiente_fronteira} provides a priori gradient estimate for the solutions of the related problems \eqref{ProblemaPsigma} let us define $w^{\pm}_\tau=\pm \phi\circ d + \tau\varphi$. Making an examination of the proof of this theorem it can easily be seen that, on account of assumptions \eqref{cond_H_Ricci_exist} and \eqref{StrongSerrinCondition_mainThm}, the geometric condition \eqref{est_usar_hiperb_0} holds for $\W$. Using again the strong Serrin condition \eqref{StrongSerrinCondition_mainThm} one has
\begin{equation}\label{est_usar_hiperb_3}
%\Delta d(x) \leq \Delta d(y) \leq -n \modulo{H(y,\varphi(y))} \ \forall \ x\in\W_a,
\Delta d(x) \leq -n \tau \modulo{H(y,\tau\varphi(y))} \ \forall \ x\in\W_a. 
\end{equation}
Replacing \eqref{est_usar_hiperb} by \eqref{est_usar_hiperb_3} we can derive in analogous way
$$\pm \Q_\tau \left(w^{\pm}_\tau\right)<\nu\psi'+\psi''=0,$$ 
where $\nu$ is the same defined in \eqref{nu}. Proceeding exactly as before we obtain 
$$\norm{\nabla u(y)}\leq\tau\norm{\nabla \varphi(y)} + \psi'(0)\leq\norm{\nabla \varphi(y)} + \psi'(0),$$
which yields the same estimate \eqref{EstGradFront}, which is independent of $\tau$, for all solutions of the related problems \eqref{ProblemaPsigma}. 
}

On the other hand, elliptic theory guarantees that any solution $u$ of the related problems \eqref{ProblemaPsigma} belongs to $\cl^3(\W)$. Hence, Theorem \ref{teo_Est_global_gradiente} can be applied to obtained the desired a priori global gradient estimate independently of $\tau$ and $u$. 

The existence of a solution $u\in\cl^{2,\alpha}(\overline{\W})$ for the Dirichlet problem \eqref{ProblemaP} is obtained applying the Leray-Schauder fixed point theorem in an usual way (see {\cite[Th. 11.4 p. 281]{GT}}). Uniqueness follows from the maximum principle, in view of the assumption $\Dz H\geq 0$.
\hfill \qedsymbol

\bigskip

\noindent{\it Proof of Theorem \ref{exist_hiperbolicoHmenor1}.}
We first recall that in $\HH^n\times\R$ there exists an entire vertical graph of constant mean curvature $\frac{n-1}{n}$. Explicit formulas were given by B\'erard-Sa Earp {\cite[Th. 2.1 p. 22]{BerardRicardo}}. The a priori height estimate for the solutions of the related problems \eqref{ProblemaPsigma} follows directly from the convex hull lemma \cite[Prop. 3.1 p. 41]{BerardRicardo}. 

The rest of the proof is the same as before, being that the a priori boundary gradient estimate follows from Theorem \ref{teo_Est_gradiente_fronteira_hiperbólico}. % and Theorem \ref{teo_Est_global_gradiente} the a priori global gradient estimate as before. 
\hfill\qedsymbol

\bigskip

\noindent{\it Proof of Theorem \ref{teo_GalvezLozano}.}
Under the hypothesis on $M$ and $\W$, Galvez-Lozano \cite[Th. 6 p. 12]{Galvez2015} proved the existence of a vertical graph over $\W$ with constant mean curvature $\frac{n-1}{n}$ and zero boundary data. As a matter of fact, such a graph constitutes a barrier for the solutions of the related problems \eqref{ProblemaPsigma}. 

On the other hand, the strong Serrin condition trivially holds since, for $y\in\partial\W$, 
$$(n-1)\Hc_{\partial\W}(y)> (n-1)c > n-1 \geq n \sup\limits_{\W\times\R}\modulo{H(x,z)}.$$
Besides, 
$$ \Ricc_x\geq -(n-1)c^2 .$$
Accordingly, the boundary gradient estimate follows from our Theorem \ref{teo_Est_gradiente_fronteira_paraGalvez}. %The rest of the proof is the same as before. 
\hfill \qedsymbol

\bibliographystyle{amsplainnovo}
%\nocite{*}
%\nocite{artigoexist:inpress}
\bibliography{bibliografia}

\newpage \ 
\vfill

\noindent Yunelsy N. Alvarez\\
Departamento de Matemática\\
Pontifícia Universidade Católica do Rio de Janeiro\\
Rio de Janeiro, Brazil, CEP 22451-900 \smallskip\\
Present Address: \\
Departamento de Matemática \\
Instituto de Matemática e Estadística\\
Universidade de São Paulo\\
São Paulo, Brazil,  CEP 05508-090 \\
Email addresses: ynapolez@gmail.com; ynalvarez@usp.br

\bigskip

\noindent Ricardo Sa Earp\\
Departamento de Matemática\\
Pontifícia Universidade Católica do Rio de Janeiro\\
Rio de Janeiro, CEP 22451-900, Brazil\\
Email address: rsaearp@gmail.com\\
\end{document}